\renewcommand\section{\@startsection {section}{1}{\z@}
{-30pt \@plus -1ex \@minus -.2ex}
{2.3ex \@plus.2ex}
{\normalfont\normalsize\bfseries}}
\renewcommand\subsection{\@startsection{subsection}{2}{\z@}
{-3.25ex\@plus -1ex \@minus -.2ex}
{1.5ex \@plus .2ex}
{\normalfont\normalsize\bfseries}}
\renewcommand{\@seccntformat}[1]{\csname the#1\endcsname. }
\numberwithin{equation}{section}
\newtheorem{thm}{Theorem}
\newtheorem{lem}{Lemma}
\newtheorem{col}{Corollary}
\numberwithin{thm}{section}
\numberwithin{prop}{section}
\numberwithin{conj}{section}
\numberwithin{col}{thm}
\numberwithin{lem}{section}
\begin{document}

\begin{center}
 \uppercase{\bf Congruences for certain lacunary sums of products of binomial coefficients}
\vskip 20pt
{\bf Ren\'{e} Gy}\\
{\tt rene.gy@numericable.com}\\
\end{center}
\vskip 20pt

\vskip 30pt

\centerline{\bf Abstract}
It is shown that for any prime $p$ and any natural numbers $\ell, m,$ and $s$ such that $0<s<p$, the three following congruences \begin{align*}\sum_{i\ge \ell+1}(-1)^{m-i} {m \choose i}{m+s-1+i(p-1) \choose m+s-1+\ell(p-1)} &\equiv 0 \bmod p\\ \sum_{i\ge 0}(-1)^{m-i} {m \choose i}{\ell+ip \choose m+s-1}&\equiv 0 \bmod  p^m\\ \sum_{j,i\ge \ell}(-1)^{j-i}{m \choose j} {j \choose i}{j+s-1+i(p-1) \choose j+s-1+\ell(p-1)}&\equiv 0 \bmod p^{m-\ell} \end{align*} hold true. The corresponding quotients involve Adelberg polynomials which can be computed explicitly, providing closed-form expressions for these sums, valid even if $p$ is not prime, when the congruences do not necessarily hold.

\pagestyle{myheadings} 
\thispagestyle{empty} 
\baselineskip=12.875pt 
\vskip 30pt

\section{Introduction, notations and preliminaries}
There exist many congruences involving binomial coefficients. Apart from their classical arithmetic properties like the famous Kummer, Lucas and Wolstenholme theorems so to name a few, other congruences involving lacunary sums or lacunary sums of products of binomial coefficients have also been known for a long time. These kind of results can be found for instance in the introduction of \cite{Granville97} where advanced arithmetic properties of binomial coefficients are presented. They have been further investigated and generalized in \cite{Sun06} and \cite{Sun07}. We recall two old examples taken from \cite{Granville97} and two more recent examples from \cite{Sun06} and \cite{Sun07}:
\begin{align*}
\sum_{i\ge0}{s+\ell(p-1) \choose h+ i(p-1)} &\equiv {s \choose h}  \pmod p & &\text{(Glaisher, 1899)}\\
\sum_{i\ge0}(-1)^{i p}{s+q(p-1) \choose h+i p} &\equiv 0  \pmod {p^q} & &\text{(Fleck, 1913)}\\
\sum_{i\ge0}(-1)^{i p}{i \choose \ell}{\ell p +s+q(p-1) \choose h+i p} &\equiv 0  \pmod {p^q} & &\text{(Wan, 2005)}\\
\sum_{i,j\ge0}(-1)^{j+i(p-1)}{q\choose j}{h +j(p-1) \choose s+ i(p-1)} &\equiv 0  \pmod {p^q} & &\text{(Sun, Tauraso, 2007)}
\end{align*}
which are valid for prime $p$, non-negative integers $\ell,q$ and integers $s,h$ such that $0< s<p$ and $0\le h<p$. Note that the afore-mentioned congruences are written here differently from how they read in the quoted papers, for an easier comparison in between them and with our own results. Also note that the last one is just a particular case from a vast generalization (\cite{Sun07}, Theorem 1.2).\\

\indent The purpose of the present paper is to establish three new congruences somewhat reminiscent of, but different from the above congruences. Namely, we will show that, for any prime $p$ and any natural numbers $\ell, m$ and $s$ such that $0<s < p$, it holds that 
\begin{align*} 
	\sum_{i\ge \ell+1}(-1)^{m-i} {m \choose i}{m+s-1+i(p-1) \choose m+s-1+\ell(p-1)}&\equiv 0 \pmod p,\\
	\sum_{i\ge 0}(-1)^{m-i} {m \choose i}{\ell+ip \choose m+s-1}&\equiv 0 \pmod { p^m},\\
	\sum_{j,i\ge \ell}(-1)^{j-i}{m \choose j} {j \choose i}{j+s-1+i(p-1) \choose j+s-1+\ell(p-1)}&\equiv 0 \pmod { p^{m-\ell}}
	\end{align*}
	and we will show how to effectively obtain the corresponding quotients.\\
	\ \\
\indent In the following $[[x^n]]f(x)$ denotes the coefficient of $x^n$ in $f(x)$, where $f$ is a formal power series with the argument $x$ and $\partial f(x)$ is the derivative of $f(x)$ with respect to $x$. If $x$ is a real number, we denote $\lfloor x \rfloor$ the largest integer smaller or equal to $x$. We also use the Iverson bracket notation: $\big[\mathfrak{P}\big]=1$ when proposition $\mathfrak{P}$ is true, and  $\big[\mathfrak{P}\big]=0$ otherwise. We recall some basic properties of the binomial coefficients and Stirling numbers, which can be found for instance in \cite{Graham94}. The  binomial coefficients  ${n\choose k}$, are defined by  $\sum_{k}{n\choose k}x^k =(1+x)^n$,  whatever the sign of integer $n$. They obviously vanish when $k<0$. They are easily obtained by  the basic recurrence relation ${{n}\choose {k}}={{n-1}\choose {k}}+{{n-1}\choose{k-1}}$,  they satisfy the Vandermonde convolution: $ \sum_{j=0}^{m}{n\choose j}{k \choose m-j} = {n+k \choose m}$ and when $n  > 0$, we have ${-n\choose k}= (-1)^k{n+k-1\choose n-1}$. The  cycle Stirling numbers (or Stirling numbers of the first kind) ${n\brack k}$, $n\ge0$, may be defined by the horizontal generating function
\begin{align}\label{gf}  \sum_{k}{n\brack {k}}x^k&=\prod_{j=0}^{n-1}(x+j), \end{align}
where an empty product is meant to be $1$. Alternatively, they have the exponential generating function 
\begin{align} \label{egfs1}
 \sum_n {n \brack k} \frac{x^n}{n!}&=\frac {(-1)^k \big(\ln(1-x)\big)^k}{k!}.\end{align}
They obviously vanish when $k<0$ and $k>n$. They are easily obtained by the basic recurrence ${{n}\brack {k}}=(n-1){{n-1}\brack {k}}+{{n-1}\brack {k-1}}$, valid for $n \ge 1$, with ${{0}\brack {k}}=[k=0]$. We let  ${n\brace {k}}$, $n\ge0$, be the partition Stirling numbers (or Stirling numbers of the second kind). They also vanish when $k<0$ and $k>n$. Their basic recurrence is
${{n}\brace {k}}=k{{n-1}\brace {k}}+{{n-1}\brace {k-1}}$ for $n\ge1$, with $ {0\brace {k}}=[k=0]$. 
\noindent They have the following exponential generating function
\begin{align} \label{egfs2}  \sum_n {n \brace k} \frac{x^n}{n!}&=\frac {(e^x-1)^k}{k!}\end{align}
and the following explicit expression
\begin{align}\label{aaa} {n\brace {k}}&= \frac{(-1)^k}{k!}\sum_{j\ge 0}(-1)^j\binom{k}{j}j^n.\end{align}

\noindent We will also need the following knowm lemmas, for which we include a short proof.
\begin {lem}\label{ce}
Let $f(w)$ be a formal power series, and $\alpha$ a natural number. We have $[[w^n]]\frac{f(w)^\alpha}{\alpha}=[[w^{n-1}]]\frac{f(w)^{\alpha-1} \partial f(w)}{n}$.
\end{lem}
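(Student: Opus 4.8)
The plan is to reduce the claimed identity to two elementary facts about formal power series: the power rule for formal differentiation and the way formal differentiation shifts coefficient indices. Both hold purely formally, so there are no convergence concerns to address.

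First I would record the basic coefficient-extraction relation. For any formal power series $g(w)=\sum_k g_k w^k$, term-by-term differentiation gives $\partial g(w)=\sum_k k g_k w^{k-1}$, so that $[[w^{n-1}]]\partial g(w)=n g_n = n\,[[w^n]]g(w)$. This single structural identity, expressing that $\partial$ multiplies the coefficient of $w^n$ by $n$ while dropping the degree by one, is the conceptual heart of the lemma.

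Next I would apply this relation to the specific series $g(w)=\frac{f(w)^\alpha}{\alpha}$. The formal power rule $\partial\big(f(w)^\alpha\big)=\alpha\,f(w)^{\alpha-1}\,\partial f(w)$, which follows from the Leibniz product rule by an easy induction on $\alpha$, gives $\partial g(w)=f(w)^{\alpha-1}\,\partial f(w)$. Substituting into the coefficient-extraction relation yields $n\,[[w^n]]\frac{f(w)^\alpha}{\alpha}=[[w^{n-1}]]f(w)^{\alpha-1}\,\partial f(w)$, and dividing by $n$ produces exactly the asserted identity $[[w^n]]\frac{f(w)^\alpha}{\alpha}=[[w^{n-1}]]\frac{f(w)^{\alpha-1}\,\partial f(w)}{n}$.

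There is essentially no serious obstacle here; the only point requiring a little care is to keep every operation inside the ring of formal power series, so that $\partial$, multiplication, and coefficient extraction are all interpreted termwise and the divisions by $n$ and by $\alpha$ are understood coefficientwise (legitimate once $n\ge 1$). Accordingly I would present the power rule as the one ingredient proved by induction and treat the coefficient-shift identity $[[w^{n-1}]]\partial g(w)=n\,[[w^n]]g(w)$ as the pivot of the whole computation.
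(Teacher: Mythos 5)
Your proof is correct and follows essentially the same route as the paper: the paper's one-line proof invokes exactly your pivot identity $[[w^n]]g(w)=\frac{[[w^{n-1}]]\partial g(w)}{n}$, implicitly applied to $g(w)=f(w)^\alpha/\alpha$ together with the formal power rule. You have simply spelled out the steps (the power rule by induction and the coefficientwise interpretation of the divisions) that the paper leaves as ``clear.''
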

\begin{proof} This is clear, since $[[w^n]]f(w)=\frac{[[w^{n-1}]]\partial f(w)}{n}$.
\end{proof}
\begin {lem}\label{st2}
For any natural number $n$, $n>0$, we have \begin{equation} \label{lab} {n \brace {p-1}} \equiv \big[p-1 \text{ divides }n\big]  \pmod p \text{.} \end{equation} 
\end{lem}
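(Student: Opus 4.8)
The plan is to start from the explicit expression \eqref{aaa} specialized to $k=p-1$, namely ${n \brace {p-1}} = \frac{(-1)^{p-1}}{(p-1)!}\sum_{j=0}^{p-1}(-1)^j\binom{p-1}{j}j^n$, and to extract the residue modulo $p$ by clearing the factorial denominator. Since $p$ is prime, $(p-1)!$ is invertible modulo $p$, and Wilson's theorem gives $(p-1)!\equiv -1 \pmod p$. Multiplying the identity through by $(p-1)!$ therefore turns the claim into a congruence between integers: it suffices to show that $-{n\brace {p-1}}\equiv (-1)^{p-1}\sum_{j=0}^{p-1}(-1)^j\binom{p-1}{j}j^n \pmod p$ is $\equiv -[(p-1)\mid n]$.

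Next I would simplify the alternating binomial sum modulo $p$. The elementary congruence $\binom{p-1}{j}\equiv(-1)^j \pmod p$, obtained by replacing each factor $p-i$ by $-i$ in the numerator of $\binom{p-1}{j}$, makes every sign cancel, so that $\sum_{j=0}^{p-1}(-1)^j\binom{p-1}{j}j^n \equiv \sum_{j=0}^{p-1} j^n = \sum_{j=1}^{p-1} j^n \pmod p$, the last equality using $n>0$. For odd $p$ one has $(-1)^{p-1}=1$, so the whole problem collapses to evaluating the power sum $\sum_{j=1}^{p-1} j^n$ modulo $p$; the prime $p=2$ is trivial, since then $p-1=1$, ${n\brace {1}}=1$, and $[1\mid n]=1$.

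The crux is the classical power-sum congruence $\sum_{j=1}^{p-1} j^n \equiv -[(p-1)\mid n] \pmod p$, which I expect to be the one genuinely arithmetic step. If $(p-1)\mid n$, Fermat's little theorem gives $j^n\equiv 1$ for each $j\in\{1,\dots,p-1\}$, whence the sum is $\equiv p-1\equiv -1$. If $(p-1)\nmid n$, I would fix a primitive root $g$ modulo $p$, rewrite the sum as the geometric progression $\sum_{k=0}^{p-2} g^{kn}=(g^{(p-1)n}-1)/(g^n-1)$, and observe that the numerator vanishes modulo $p$ while $g^n-1$ is a unit, forcing the sum to be $\equiv 0$. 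Combining this with the reduction above yields $-{n\brace {p-1}}\equiv -[(p-1)\mid n]$, i.e.\ \eqref{lab}. The only point requiring care is the legitimacy of dividing by $(p-1)!$ and by $g^n-1$, both of which are justified precisely because $p$ is prime.
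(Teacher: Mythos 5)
Your proof is correct and takes essentially the same route as the paper's: both specialize the explicit formula \eqref{aaa} to $k=p-1$ and combine Wilson's theorem, the congruence $\binom{p-1}{j}\equiv(-1)^j \pmod p$, and the power-sum congruence $\sum_{j=1}^{p-1}j^n\equiv -\big[p-1 \text{ divides } n\big] \pmod p$. The only difference is that you prove the power-sum congruence (via Fermat and a primitive root) and check $p=2$ separately, whereas the paper simply cites these facts as well known.
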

\begin{proof} 
We recall the Wilson theorem which states that  $(p-1)! \equiv -1 \bmod p$ for any prime $p$, and two other well-known congruences, valid for any prime $p$:
\begin {align*}
\binom{p-1}{j} &\equiv (-1)^j \big[0\le j\le p-1 \big]\pmod p\text{,}   \\
 \sum_{p-1\ge j\ge 1}j^k  &\equiv  - \big[p-1 \text{ divides }k\big]\pmod p\text{,} 
\end{align*} so that, the claim readily follows from Equation \eqref{aaa}.
\end{proof}

\section {The $p$-congruence for Stirling numbers of the first kind.}
The following known \cite{Peele93} $p$-congruence for the Stirling numbers of the first kind will be essential to our argument in Section 4.
\begin{thm} \label{3.1} Let $p$ be a prime number and $n, k$  non-negative integers such that $0\le k \le n$ and let $r$ (respectively $q$)  be the residue (respectively the quotient) of the Euclidean division of $n$ by $p$ and let $\rho$ be the residue of the Euclidean division of $k-q$ by $p-1$. We have  
\begin{align}\label{3.1}
{n \brack k} \equiv  (-1)^{q -\frac{k-q-j}{p-1}} {r \brack j} \binom{q}{\frac{k-q-j}{p-1}} \pmod p
\end{align}
with $j=\rho+[\rho=0][r=p-1](p-1)$.
\end{thm}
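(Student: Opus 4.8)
The plan is to work directly with the horizontal generating function \eqref{gf}, namely $\sum_k {n \brack k} x^k = \prod_{j=0}^{n-1}(x+j)$, and reduce its right-hand side modulo $p$. The key elementary fact is that $\prod_{t=0}^{p-1}(x+t) \equiv x^p - x \pmod p$, which follows from Fermat's little theorem, since the elements of $\mathbb{F}_p$ are precisely the roots of $x^p - x$. Writing $n = qp + r$ with $0 \le r < p$, I would split the $n$ consecutive factors $(x+j)$, $0 \le j \le n-1$, into $q$ complete blocks of $p$ consecutive integers together with a final block of $r$ factors. Each complete block is congruent to $x^p - x$ modulo $p$, while the final block contributes $\prod_{t=0}^{r-1}(x+t) = \sum_\ell {r \brack \ell} x^\ell$, so that
\[
\prod_{j=0}^{n-1}(x+j) \equiv (x^p - x)^q \prod_{t=0}^{r-1}(x+t) \pmod p.
\]

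Next I would expand $(x^p - x)^q = x^q (x^{p-1}-1)^q = \sum_{i} (-1)^{q-i}\binom{q}{i} x^{q + i(p-1)}$ by the binomial theorem, multiply by $\sum_\ell {r \brack \ell}x^\ell$, and extract the coefficient of $x^k$. This yields
\[
{n \brack k} \equiv \sum_i (-1)^{q-i}\binom{q}{i}{r \brack k - q - i(p-1)} \pmod p,
\]
the only potentially surviving terms being those for which $\ell := k - q - i(p-1)$ satisfies $0 \le \ell \le r$.

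The heart of the argument, and the one delicate point, is to show that this sum collapses to a single term matching the stated formula. Since $\ell \equiv k - q \equiv \rho \pmod{p-1}$ and $0 \le \ell \le r \le p-1$, the admissible values of $\ell$ lie in $\{0,1,\dots,p-1\}$ and are congruent to $\rho$ modulo $p-1$. When $\rho \ne 0$ there is exactly one such value, $\ell = \rho$; when $\rho = 0$ there are two candidates, $\ell = 0$ and $\ell = p-1$, but ${r \brack 0} = [r=0]$ and ${r \brack p-1} = [r = p-1]$, so at most one of them gives a nonzero Stirling number. A short case analysis then confirms that the contributing index is exactly $\ell = j = \rho + [\rho=0][r=p-1](p-1)$, with corresponding $i = (k-q-j)/(p-1)$, which reproduces the claimed congruence. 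I expect the bookkeeping around the boundary case $\rho = 0$, distinguishing $r = p-1$ from $r < p-1$, to be where the care is concentrated, the remainder being a routine generating-function computation.
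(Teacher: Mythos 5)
Your proposal is correct and follows essentially the same route as the paper's proof: splitting $\prod_{j=0}^{n-1}(x+j)$ into $q$ blocks congruent to $x^p-x$ modulo $p$ plus a tail of $r$ factors, expanding $(x^p-x)^q = x^q(x^{p-1}-1)^q$ binomially, and then isolating the unique surviving index via the case analysis on $\rho$ and $r$ (the paper proves $\prod_{t=0}^{p-1}(x+t)\equiv x^p-x \pmod p$ by comparing degrees, leading coefficients, and roots, which is the same Fermat-type fact you cite). Your handling of the boundary case $\rho=0$, using ${r \brack 0}=[r=0]$ and ${r \brack p-1}=[r=p-1]$, matches the paper's conclusion $j=\rho+[\rho=0][r=p-1](p-1)$ exactly.
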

\begin{proof} For the sake of self-containment, we reproduce the proof from \cite{Peele93}. Let $p$ be a prime number. We consider $\prod_{j=0}^{p-1}(x+j)= \sum_{k}{p\brack {k}}x^k $ as an element of the ring of polynomials of $  \mathbb Z  /  p\mathbb Z$. In that ring, we have  $ \sum_{k}{p\brack {k}}x^k= x^p-x $, since the polynomials on both sides have have  same degree $p$, same coefficient for $x^p$ and same roots:  $0,-1,-2, \cdot \cdot \cdot \cdot, -(p-1)$. In particular, for $k$ such that $1<k \le p-1$, we have ${p\brack {k}}\equiv 0 \bmod p$. Let $n$ be a non-negative integer and $r$ (respectively $q$) be  the residue (respectively the quotient) of the Euclidean division of $n$ by $p$, such that $n=q p+r$, with $0 \le r \le p-1$.
We may explicitly write and regroup the factors of $\prod_{j=0}^{n-1}(x+j)$ so that  
$$\prod_{j=0}^{n-1}(x+j)= \prod_{t=0}^{q-1} \left((x+tp)(x+tp+1)\cdot \cdot\cdot (x+tp+(p-1)\right)  \prod_{u=0}^{r-1}(x+q+u),$$
 with the convention that when $q=0$ or $r=0$ the empty products are meant to be equal to  $1$. Then, reducing modulo $ p$, we have 
\begin{align*} \sum_{k}{n\brack {k}}x^k &\equiv \prod_{t=0}^{q-1} \left(x(x+1)\cdot \cdot\cdot (x+(p-1)\right)  \prod_{u=0}^{r-1}(x+q p+u) \pmod p\\
& \equiv \left(x(x+1)\cdot \cdot\cdot (x+(p-1)\right)^{q} \prod_{u=0}^{r-1}(x+u) \pmod p \\
& \equiv \left(x^p-x\right)^{ q} \prod_{u=0}^{r-1}(x+u) \pmod p \\
& \equiv x^{q} \left(x^{p-1}-1\right)^{q} \prod_{u=0}^{r-1}(x+u) \pmod p \\
& \equiv x^{q} \sum_{m=0}^{q} (-1)^{q-m}{q \choose m} x^{m(p-1)} \prod_{u=0}^{r-1}(x+u) \pmod p \end{align*}
\begin{align*}
\sum_{k}{n\brack {k}}x^{k-q} & \equiv \left(\sum_{m=0}^{q} (-1)^{q-m}{q\choose m} x^{m(p-1)}\right)\left(\sum_{\ell=0}^{r}{r\brack {\ell}} x^{\ell}\right) \pmod p \\
& \equiv \sum_{k}\sum_{\underset{m(p-1)+\ell=k-q}{m,l}}(-1)^{q-m}{q \choose m}{r\brack {\ell}} x^{k-q}\pmod p. \\
\end{align*}
\noindent Hence
\begin{align}{n\brack {k}}&\equiv \sum_{\underset{m(p-1)+\ell=k-q}{m,l}}(-1)^{q-m}{q \choose m}{r\brack {\ell}} \pmod p. 
\end{align}
Now if $p$ divides $n$, we have $r=0$, and there may exist only one possible solution in non-negative integers $ \ell,m$  to the equation $ m(p-1)+\ell=k-q $ which may have a non-zero contribution to the sum on the right hand side of the above congruence: this is when $p-1$ divides $k-\frac{n}{p}$ and we have $\ell=0$ and $m=\frac{k-q}{p-1}$. Otherwise, $1\le r <p$ and since $\ell \le r$ and $r<p$, we have $\ell < p$. But we also have $\ell >0$, since ${r\brack 0}=0$, since $r>0$. 
Then,  there exists at most one solution in non-negative integers $ \ell,m$  to the equation $ m(p-1)+\ell=k-q $.   
Indeed, let $\rho$ be the residue of the Euclidean division of $k-q$ by $p-1$. We have $0 \le \rho <p-1$. If $\rho=0$, then the unique solution is $\ell=p-1$ and $m= \frac{k-q}{p-1}-1$. If $0<\rho \le r$, then the unique solution is $\ell = \rho$ and $m= \frac{k-q-\rho}{p-1}$. And finally, if $r<\rho <p-1$, there is no solution. Putting everything together, we obtain the claimed $p$-congruence of the Stirling numbers of the first kind. \end{proof}

\begin{col}\label{3.1.1.c} Let $p$ be a prime number and $i,m$ and $s$  be three natural numbers. We have
\begin{align}\label{3.1.1}
{m+s+m(p-1) \brack m+s +i(p-1)} &\equiv (-1)^{m-i}{m+\lfloor\frac{s}{p}\rfloor \choose i +\lfloor\frac{s}{p}\rfloor}  \pmod p .
\end{align}
\begin{proof} We apply Theorem \ref{3.1}  with  $n= mp+s$ and $k= m+s +i(p-1)$. We have $q= m+\lfloor\frac{s}{p}\rfloor$ and $k-q= s +i(p-1)-\lfloor\frac{s}{p}\rfloor=(i+\lfloor\frac{s}{p}\rfloor)(p-1)+r$. Then, when $s -p\lfloor\frac{s}{p}\rfloor=r <p-1$ we have $r=\rho<p-1$ and then $j=r$ and then Congruence \eqref{3.2} reduces to Congruence \eqref{3.1.1}. Otherwise $s -p\lfloor\frac{s}{p}\rfloor= r = p-1$ and then $\rho=0$ and then $j=p-1$ and then Congruence \eqref{3.2} also reduces to Congruence \eqref{3.1.1}.
\end{proof}

\end {col}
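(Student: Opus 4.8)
My plan is to derive the corollary as a direct specialization of Theorem \ref{3.1}, taking $n=mp+s$ and $k=m+s+i(p-1)$. The first step is to record the Euclidean division of $n$ by $p$: writing $s=p\lfloor\frac{s}{p}\rfloor+r$ with $0\le r\le p-1$, one has $n=mp+s=\big(m+\lfloor\frac{s}{p}\rfloor\big)p+r$, so the quotient is $q=m+\lfloor\frac{s}{p}\rfloor$ and the residue is $r=s-p\lfloor\frac{s}{p}\rfloor$. Before invoking the theorem I would dispose of the range $i>m$ separately: there the Stirling number vanishes because $k-n=(i-m)(p-1)>0$ forces $k>n$, while the binomial coefficient on the right vanishes because its lower index exceeds its upper one, so \eqref{3.1.1} holds as $0\equiv 0$. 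It thus suffices to treat $0\le i\le m$, where the hypothesis $0\le k\le n$ of Theorem \ref{3.1} is met.

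Next I would examine $k-q=s-\lfloor\frac{s}{p}\rfloor+i(p-1)$. Using $s-\lfloor\frac{s}{p}\rfloor=(p-1)\lfloor\frac{s}{p}\rfloor+r$, a short rearrangement gives $k-q=\big(i+\lfloor\frac{s}{p}\rfloor\big)(p-1)+r$, which presents $i+\lfloor\frac{s}{p}\rfloor$ as a candidate quotient and $r$ as a candidate remainder for the division of $k-q$ by $p-1$. The one delicate point, and what I expect to be the main obstacle, is that $r$ may be as large as $p-1$: when $r=p-1$ this is not the genuine division modulo $p-1$, the residue $\rho$ of Theorem \ref{3.1} collapses to $0$, and the Iverson correction in the definition of $j$ switches on. I would therefore split into two cases according to whether $r<p-1$ or $r=p-1$.

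In the case $r<p-1$ we have $\rho=r$, the correction $[\rho=0][r=p-1]$ is $0$, and so $j=\rho=r$ with $\frac{k-q-j}{p-1}=i+\lfloor\frac{s}{p}\rfloor$. In the case $r=p-1$ we have $\rho=0$ but the correction contributes $p-1$, whence $j=p-1=r$, and again $\frac{k-q-j}{p-1}=i+\lfloor\frac{s}{p}\rfloor$. Thus in both cases $j=r$, so that the Stirling factor is ${r\brack j}={r\brack r}=1$; the sign exponent simplifies to $q-\frac{k-q-j}{p-1}=\big(m+\lfloor\frac{s}{p}\rfloor\big)-\big(i+\lfloor\frac{s}{p}\rfloor\big)=m-i$; and the binomial factor becomes ${q\choose i+\lfloor\frac{s}{p}\rfloor}={m+\lfloor\frac{s}{p}\rfloor\choose i+\lfloor\frac{s}{p}\rfloor}$. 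Feeding these values into the congruence of Theorem \ref{3.1} then yields exactly \eqref{3.1.1}, which would complete the proof.
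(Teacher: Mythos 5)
Your proposal is correct and follows essentially the same route as the paper: apply Theorem \ref{3.1} with $n=mp+s$, $k=m+s+i(p-1)$, identify $q=m+\lfloor\frac{s}{p}\rfloor$ and $k-q=(i+\lfloor\frac{s}{p}\rfloor)(p-1)+r$, then split into the cases $r<p-1$ and $r=p-1$, obtaining $j=r$ in both. Your treatment is in fact slightly more careful than the paper's, since you explicitly dispose of the range $i>m$ (where the theorem's hypothesis $k\le n$ fails but both sides of \eqref{3.1.1} vanish) and verify that the Stirling factor ${r\brack r}=1$, points the paper leaves implicit.
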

\section {An identity involving Stirling numbers}
In the following theorem, we present an identity involving binomial coefficients and Stirling numbers of both kinds which we believe is new.
\begin{thm} \label{3.2}Let $p$ be a positive integer and $n, k$  non-negative integers. We have
\begin{align}\label{3.3}
(-1)^{p-1}{n-1 \choose p-1}{n-p+1 \brack k}&= \sum_{i}(-1)^i {k-1+i \choose i}{i \brace p-1} {n \brack i+k}.
\end{align}
\end{thm}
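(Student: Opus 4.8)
The plan is to prove \eqref{3.3} by passing to the horizontal generating function in the index $k$. Multiplying \eqref{3.3} by $x^k$ and summing over $k\ge 0$, the left-hand side becomes, by \eqref{gf},
$(-1)^{p-1}\binom{n-1}{p-1}\sum_k {n-p+1 \brack k}x^k=(-1)^{p-1}\binom{n-1}{p-1}\prod_{j=0}^{n-p}(x+j)$. Since ${n-p+1 \brack k}$ and ${n \brack i+k}$ vanish once $k$ is large, both sides are polynomials in $x$, so it suffices to prove the resulting identity of polynomials; extracting the coefficient of $x^k$ then returns \eqref{3.3}. First I would transform the right-hand side: interchanging the $i$- and $k$-sums and substituting $m=i+k$, the inner sum is $\sum_m \binom{m-1}{i}{n \brack m}x^{m-i}$. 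Writing $\binom{m-1}{i}=[[t^i]](1+t)^{m-1}$ and applying \eqref{gf} collapses it to $x^{-i}[[t^i]]\Phi(x,t)$, where $\Phi(x,t):=\frac{1}{1+t}\prod_{j=0}^{n-1}(x(1+t)+j)=x\prod_{j=1}^{n-1}(x(1+t)+j)$ is a genuine polynomial in $t$. Thus the generating function of the right-hand side equals $\sum_i (-1)^i {i \brace {p-1}}x^{-i}[[t^i]]\Phi(x,t)$.

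Next I would eliminate the Stirling numbers of the second kind by means of the explicit formula \eqref{aaa}. Substituting ${i \brace {p-1}}=\frac{(-1)^{p-1}}{(p-1)!}\sum_\ell(-1)^\ell\binom{p-1}{\ell}\ell^i$ and interchanging summation turns the $i$-sum into an evaluation of the polynomial $\Phi$, yielding $\frac{1}{(p-1)!}\sum_{\ell=0}^{p-1}(-1)^{p-1-\ell}\binom{p-1}{\ell}\Phi(x,-\ell/x)$. A direct simplification of the substitution $t=-\ell/x$ gives $x(1+t)=x-\ell$, hence $\Phi(x,-\ell/x)=x\prod_{j=1}^{n-1}(x+j-\ell)$, so the whole expression is
$\frac{x}{(p-1)!}\,\Delta^{p-1}_\ell\big[\prod_{j=1}^{n-1}(x+j-\ell)\big]\big|_{\ell=0}$, a $(p-1)$-th forward difference in the variable $\ell$.

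The heart of the argument, and the step I expect to be the main obstacle, is the evaluation of this finite difference. Setting $R_r(\ell):=\prod_{j=1}^{r}(x+j-\ell)$, a one-line computation gives $R_r(\ell+1)=(x-\ell)R_{r-1}(\ell)$ and $R_r(\ell)=(x+r-\ell)R_{r-1}(\ell)$, whence the clean recurrence $\Delta_\ell R_r=-r\,R_{r-1}$. Iterating it $p-1$ times starting from $R_{n-1}$ telescopes to $\Delta^{p-1}_\ell R_{n-1}=(-1)^{p-1}\frac{(n-1)!}{(n-p)!}R_{n-p}=(-1)^{p-1}(p-1)!\binom{n-1}{p-1}R_{n-p}$. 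Evaluating at $\ell=0$ and multiplying by $\frac{x}{(p-1)!}$ collapses the right-hand-side generating function to $(-1)^{p-1}\binom{n-1}{p-1}\,x\prod_{j=1}^{n-p}(x+j)=(-1)^{p-1}\binom{n-1}{p-1}\prod_{j=0}^{n-p}(x+j)$, exactly the left-hand side, which finishes the proof.

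The only bookkeeping I would flag is that the transient negative powers $x^{-i}$ cause no trouble: each $x^{-i}[[t^i]]\Phi(x,t)$ is in fact a polynomial in $x$ (the coefficient of $t^i$ in $\Phi$ carries a factor $x^{i+1}$), so every interchange of summation is between finite sums and every intermediate object lives in $\mathbb{Z}[x]$. The degenerate cases $p=1$ (where ${i \brace 0}=[i=0]$ and $\Delta^0$ is the identity) and $n=0$ are immediate and should be checked to confirm the empty products are read correctly. Note also that primality of $p$ is never used, so the identity holds for every positive integer $p$, as asserted.
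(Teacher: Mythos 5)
Your proof is correct, but it takes a genuinely different route from the paper's. The paper (following Riedel) works with exponential generating functions: it expands ${i \brace p-1}$ and ${n \brack i+k}$ via \eqref{egfs2} and \eqref{egfs1}, applies Lemma \ref{ce} twice to absorb the denominators $i+k$ and $k$, and the decisive step is the power-series composition $e^{-\log\frac{1}{1-w}}-1=-w$. You instead use the horizontal generating function \eqref{gf} in the index $k$, eliminate ${i \brace p-1}$ through the explicit formula \eqref{aaa}, and reduce the whole identity to the telescoping finite-difference recurrence $\Delta_\ell R_r=-r\,R_{r-1}$ for $R_r(\ell)=\prod_{j=1}^{r}(x+j-\ell)$. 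I checked the individual steps — the interchange of the (finite) sums, the factor $x^{i+1}$ in $[[t^i]]\Phi(x,t)$ that keeps everything in $\mathbb{Z}[x]$, the sign bookkeeping $(-1)^{p-1}(-1)^{\ell}=(-1)^{p-1-\ell}$, and the recurrence itself — and they all hold. Your approach buys two things: it is more elementary (polynomial identities and finite differences, with no composition of formal power series), and it makes transparent both why primality of $p$ is irrelevant and how the identity fits the divided-difference theme of the Adelberg polynomials later in the paper; the EGF argument, in exchange, is shorter and mechanical once Lemma \ref{ce} is available. One point you should make explicit rather than leave implicit: the telescoped formula $\Delta^{p-1}_\ell R_{n-1}=(-1)^{p-1}(p-1)!\binom{n-1}{p-1}R_{n-p}$ presupposes $n\ge p$; when $1\le n<p$ the iteration reaches the constant $R_0$ and all further differences vanish, so the right-hand side generating function is $0$, matching $\binom{n-1}{p-1}=0$ on the left. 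This is the same implicit restriction present in the paper's own proof (which likewise writes $(n-p)!$ and $[[w^{n-p}]]$), so it is not a gap relative to the paper, but it belongs with the $n=0$ and $p=1$ degenerate cases you already flagged.
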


\noindent {\bf Remark.} It is interesting to compare Equation \eqref{3.3} to the identity (6.28) in \cite{Graham94}. The latter is also a three parameters identity, but involving only the second kind of Stirling numbers, which is rather easily obtained from their exponential generating function. We reproduce it hereafter: under the condition that $\ell,m,n \ge 0$, we have 
\begin{align} \label{kn} {\ell+m \choose \ell}{n \brace \ell+m}= \sum_k {k\brace \ell}{n-k \brace m}{n \choose k}. \end{align} 

\noindent If we replace $m$ by $-m$ and $n$ by $-n$ in Equation \eqref{kn}, taking into account that  ${-n \choose k}=(-1)^k{n+k-1\choose k}$ and the known duality ${-a \brace -b}$ = ${b \brack a}$, see \cite{Graham94}, we obtain our Equation \eqref{3.3} with $p-1= \ell$. This shows that Equation \eqref{kn} holds under the only condition $\ell \ge 0$ and that the conditions $m,n \ge 0 $ given in \cite{Graham94} are actually not needed.\\
\ \\
\noindent{\it Proof of Theorem \ref{3.2}.} The following proof, using the coefficient extractor method and the generating functions Equation \eqref{egfs1} and Equation \eqref{egfs2} is due to Marko Riedel \cite{Riedel20}.
\noindent Let $S$ be the right hand side of Equation \eqref{3.3}. We have
\begin{align*}
S&=  \sum_{i=p-1}^{n-k}(-1)^i {k-1+i \choose i}{i \brace p-1} {n \brack i+k}\\
&=   \sum_{i=p-1}^{n-k}(-1)^i {k-1+i \choose i}i![[z^i]]\frac{(e^z-1)^{p-1}}{(p-1)!} n![[w^n]]\frac{\left(\log\frac{1}{1-w} \right)^{i+k}}{(i+k)!}\\
&=  \frac{n!}{(p-1)!(k-1)!} [[w^n]]\sum_{i=p-1}^{n-k}(-1)^i [[z^i]](e^z-1)^{p-1} \frac{\left(\log\frac{1}{1-w} \right)^{i+k}}{(i+k)}.
\end{align*}
\noindent Then by Lemma \ref{ce}, we have 
\begin{align*}
S&=  \frac{(n-1)!}{(p-1)!(k-1)!}[[w^{n-1}]]\frac{1}{1-w}\sum_{i=p-1}^{n-k}(-1)^i[[z^i]](e^z-1)^{p-1} \left(\log\frac{1}{1-w} \right)^{i+k-1}\\
&= \frac{(-1)^{k-1}(n-1)!}{(p-1)!(k-1)!} [[w^{n-1}]]\frac{1}{1-w}\sum_{i=p+k-2}^{n-1}(-1)^{i}[[z^i]]z^{k-1}(e^z-1)^{p-1}\left(\log\frac{1}{1-w} \right)^{i}\\
&= \frac{(-1)^{k-1}(n-1)!}{(p-1)!(k-1)!}[[w^{n-1}]] \frac{1}{1-w}\sum_{i\ge p+k-2}\left(-\log\frac{1}{1-w} \right)^{i}[[z^i]]z^{k-1}(e^z-1)^{p-1}
\end{align*}
since $\left(\log\frac{1}{1-w} \right)^{i}= w^i+ \cdot \cdot$. Now, we have  $(e^z-1)^{p-1}=z^{p-1}+  \cdot \cdot$, then the lowest power of $z$ in the power series of $z^{k-1}(e^z-1)^{p-1}$ is $z^{p+k-2}$ and then
\begin{align*}
S&= \frac{(-1)^{k-1}(n-1)!}{(p-1)!(k-1)!}[[w^{n-1}]] \frac{1}{1-w}\sum_{i\ge 0}\left(-\log\frac{1}{1-w} \right)^{i}[[z^i]]z^{k-1}(e^z-1)^{p-1}\\
&= \frac{(-1)^{k-1}(n-1)!}{(p-1)!(k-1)!} [[w^{n-1}]] \frac{1}{1-w}\left(-\log\frac{1}{1-w} \right)^{k-1}(e^{-\log\frac{1}{1-w}}-1)^{p-1}\\
&= \frac{(-1)^{k-1}(n-1)!}{(p-1)!(k-1)!} [[w^{n-1}]] \frac{1}{1-w}\left(-\log\frac{1}{1-w} \right)^{k-1}(-w)^{p-1}\\
&= \frac{(-1)^{p-1}(n-1)!}{(p-1)!(k-1)!} [[w^{n-1}]] \frac{1}{1-w}\left(\log\frac{1}{1-w} \right)^{k-1}w^{p-1}\\
&= \frac{(-1)^{p-1}(n-1)!}{(p-1)!(k-1)!} [[w^{n-p}]] \frac{1}{1-w}\left(\log\frac{1}{1-w} \right)^{k-1}\\
&= \frac{(-1)^{p-1}(n-1)!}{(p-1)!(k-1)!}(n-p+1) [[w^{n-p}]] \frac{1}{1-w}\frac{\left(\log\frac{1}{1-w} \right)^{k-1}}{n-p+1}\\
&= \frac{(-1)^{p-1}(n-1)!}{(p-1)!(k-1)!}(n-p+1)  [[w^{n-p+1}]]\frac{\left(\log\frac{1}{1-w} \right)^{k}}{k} \ \  \ \text{by Lemma  \ref{ce} and then}\\
S&= \frac{(-1)^{p-1}(n-1)!}{(p-1)!(n-p)!}(n-p+1)! [[w^{n-p+1}]]\frac{\left(\log\frac{1}{1-w} \right)^{k}}{k!}\\
&= (-1)^{p-1}{n-1 \choose p-1}{n-p+1 \brack k}.
\end{align*}
\qed

\noindent As corollary, we have a new congruence involving Stirling numbers of the first kind: 
\begin{col}\label{3.4} Let $p$ be a prime number, and let $n,k$ be non-negative integers. The following congruence holds
\begin{align}\label{3.4}
\sum_{\underset{p-1 \vert  i}{i>0}}{k-1+i \choose k-1}{n \brack i+k} &\equiv  \big[p \text{ divides n}\big]{n-p+1 \brack k}  \pmod p .
\end{align}
\begin{proof}
Consider Equation \eqref{3.3} when $p$ is prime. When $i=0$, we have ${i \brace p-1} =0$, and when $i>0$, by Lemma \ref{st2}, we have ${i \brace p-1} \equiv \big[p-1 \text{ divides } i\big] \bmod p$. Moreover  $(-1)^{p-1} \equiv 1 \bmod p$, then, modulo $p$,  the right hand side of Equation \eqref{3.3} is $$\sum_{\underset{p-1 \vert  i}{i>0}}{k-1+i \choose k-1}{n \brack i+k}.$$ Now, for the left hand side of Equation \eqref{3.3}, when $p$ does not divide $n$, by Kummer theorem, whenever $n-p \not \equiv 0 \bmod p$ we have ${n-1 \choose p-1} \equiv 0 \pmod p$, since in this case the addition $(n-p)+(p-1)$ in base $p$ has at least one carry (at the lowest digit). And eventually, when $p$ divides $n$, we have ${n-1 \choose p-1} \equiv {-1 \choose p-1}=(-1)^{p-1} \equiv 1 \pmod p$.
\end{proof}
\end {col}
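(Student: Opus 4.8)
The plan is to obtain the congruence directly from the exact identity \eqref{3.3} of Theorem \ref{3.2} by specializing $p$ to a prime and reducing every factor modulo $p$, handling the two sides of \eqref{3.3} separately. So I would begin by writing down \eqref{3.3} for prime $p$ and then peel off the terms that do not survive the reduction.

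First I would simplify the right-hand side $\sum_{i}(-1)^i {k-1+i \choose i}{i \brace p-1}{n \brack i+k}$. The index $i=0$ contributes nothing, since ${0 \brace p-1}=0$ (for any prime one has $p-1\ge 1$). For $i>0$, Lemma \ref{st2} replaces ${i \brace p-1}$ by $\big[p-1\text{ divides }i\big]$ modulo $p$, so only the indices $i>0$ with $p-1\mid i$ remain. On each such index the sign $(-1)^i$ is trivial modulo $p$: if $p$ is odd then $p-1$ is even, so $p-1\mid i$ forces $i$ even and $(-1)^i=1$; if $p=2$ then $(-1)^i\equiv 1\pmod 2$ regardless. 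Rewriting ${k-1+i\choose i}={k-1+i\choose k-1}$, the right-hand side becomes congruent to $\sum_{\underset{p-1\mid i}{i>0}}{k-1+i\choose k-1}{n\brack i+k}$ modulo $p$, which is exactly the sum in the statement.

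Next I would reduce the left-hand side $(-1)^{p-1}{n-1\choose p-1}{n-p+1\brack k}$. Since $(-1)^{p-1}\equiv 1\pmod p$ (immediate for odd $p$, and valid for $p=2$ because $-1\equiv 1\pmod 2$), it remains to show ${n-1\choose p-1}\equiv\big[p\text{ divides }n\big]\pmod p$. Writing $n=qp+r$ with $0\le r<p$ and applying Lucas's theorem (equivalently the Kummer count-of-carries criterion) to the base-$p$ digits of $n-1$ dispatches both cases: when $p\nmid n$ the lowest digit of $n-1$ equals $r-1\le p-2$, so the factor ${r-1\choose p-1}$ vanishes and ${n-1\choose p-1}\equiv 0$; when $p\mid n$ one has $n-1=(q-1)p+(p-1)$, whose lowest digit contributes ${p-1\choose p-1}=1$, giving ${n-1\choose p-1}\equiv 1$. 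Multiplying through by the surviving factor ${n-p+1\brack k}$ then matches the right-hand side.

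I do not expect a genuine obstacle here, since the arithmetic is carried entirely by Theorem \ref{3.2} together with Lemma \ref{st2}; the only points demanding care are the bookkeeping of the two sign factors $(-1)^i$ and $(-1)^{p-1}$, which must each be checked to be trivial modulo $p$ on the relevant indices, with $p=2$ being the one case where this is not automatic, and the elementary evaluation of ${n-1\choose p-1}\bmod p$.
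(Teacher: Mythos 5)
Your proposal is correct and follows essentially the same route as the paper: specialize the exact identity \eqref{3.3} to prime $p$, kill the $i=0$ term, replace ${i \brace p-1}$ by $\big[p-1 \mid i\big]$ via Lemma \ref{st2}, and evaluate ${n-1 \choose p-1} \bmod p$ by cases on whether $p \mid n$. The only cosmetic difference is that you use Lucas's theorem (digit-by-digit) for both cases of ${n-1\choose p-1}$, where the paper invokes Kummer's carry criterion when $p \nmid n$ and the identity ${-1\choose p-1}=(-1)^{p-1}$ when $p \mid n$; your explicit check of the signs $(-1)^i$ and $(-1)^{p-1}$, including $p=2$, is a bit more careful than the paper's one-line remark.
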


\section {The first congruence}
\noindent We can now prove our first claim from the introduction. Actually, we have an even stronger theorem:

	\begin{thm}\label{3.1.2} Let $p$ be a prime number, and $m,\ell$ and $s$ three natural numbers such that $m>l$. We have
	\begin{equation*}
	\sum_{i\ge \ell+1}(-1)^{m-i} {m+\lfloor \frac{s}{p}\rfloor \choose i+\lfloor \frac{s}{p}\rfloor}{m+s-1+i(p-1) \choose m+s-1+\ell(p-1)}\end{equation*}
	\begin{align} \label{3.1.2}& \equiv \big[p \text{ divides s}\big](-1)^{m-1-\ell} {m-1+\frac{s}{p} \choose \ell+\frac{s}{p}}\pmod p .
	\end{align}
	\end{thm}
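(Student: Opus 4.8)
The plan is to reduce the left-hand side of \eqref{3.1.2}, modulo $p$, to a single instance of Corollary \ref{3.4}. The starting observation is that the binomial factor in the summand is, up to sign, a Stirling number of the first kind: since $m+s+m(p-1)=mp+s$, Corollary \ref{3.1.1.c} reads
\[
{mp+s \brack m+s+i(p-1)}\equiv (-1)^{m-i}{m+\lfloor \tfrac sp\rfloor \choose i+\lfloor \tfrac sp\rfloor}\pmod p .
\]
Substituting this into the sum, the left-hand side of \eqref{3.1.2} becomes, modulo $p$,
\[
\sum_{i\ge \ell+1}{mp+s \brack m+s+i(p-1)}{m+s-1+i(p-1) \choose m+s-1+\ell(p-1)},
\]
which is a finite sum since the Stirling number vanishes as soon as $i>m$.

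Next I would set $k=m+s+\ell(p-1)$ and write the running index as $i=\ell+b$ with $b\ge1$. Then the binomial factor equals ${k-1+b(p-1) \choose k-1}$ and the Stirling factor equals ${mp+s \brack b(p-1)+k}$, so the sum is exactly
\[
\sum_{b\ge1}{k-1+b(p-1) \choose k-1}{mp+s \brack b(p-1)+k},
\]
which is the left-hand side of \eqref{3.4} with $n=mp+s$, the summation variable $b(p-1)$ running precisely over the positive multiples of $p-1$. Corollary \ref{3.4} then evaluates this to $\big[p\text{ divides }mp+s\big]{mp+s-p+1 \brack m+s+\ell(p-1)}\pmod p$, and since $[\,p\mid mp+s\,]=[\,p\mid s\,]$ the whole expression is $\equiv 0$ when $p\nmid s$, already matching the right-hand side of \eqref{3.1.2} in that case.

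It remains to treat $p\mid s$: writing $s=p\sigma$ one checks that $mp+s-p+1=p(m+\sigma-1)+1$ and $m+s+\ell(p-1)=(m+\sigma-1)+1+(\ell+\sigma)(p-1)$, so the surviving Stirling number is once more of the type handled by Corollary \ref{3.1.1.c}, now with parameters $m'=m+\sigma-1$, $s'=1$ and $i'=\ell+\sigma$ (all nonnegative, as $m\ge1$). Since $\lfloor 1/p\rfloor=0$, a second application of Corollary \ref{3.1.1.c} gives ${mp+s-p+1 \brack m+s+\ell(p-1)}\equiv(-1)^{m-1-\ell}{m+\sigma-1 \choose \ell+\sigma}\pmod p$, which is precisely the claimed $(-1)^{m-1-\ell}{m-1+\frac sp \choose \ell+\frac sp}$. (Equivalently, Theorem \ref{3.1} applies directly, with quotient $q=m+\sigma-1$, remainder $r=1$, and $j=1$.) The one step that genuinely needs checking rather than mere computation is the recognition in the second paragraph: that after the substitution the sum is, term for term, the left-hand side of \eqref{3.4}, with the range $i\ge \ell+1$ corresponding bijectively to the positive multiples $b(p-1)$ and no boundary term lost; everything else is a direct application of the two corollaries.
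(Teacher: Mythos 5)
Your proposal is correct and is essentially the paper's own proof run in the opposite direction: the paper starts from Corollary \ref{3.4} with $n=mp+s$, $k=m+s+\ell(p-1)$ and then converts both sides via Corollary \ref{3.1.1.c}, while you convert the binomial factors to Stirling numbers first and then invoke Corollary \ref{3.4}; the only cosmetic difference is that you evaluate the boundary Stirling number ${mp+s-p+1 \brack m+s+\ell(p-1)}$ with parameters $(m+\tfrac sp-1,\,1,\,\ell+\tfrac sp)$, whereas the paper uses $(m-1,\,s+1,\,\ell)$ and the observation $\lfloor\tfrac{s+1}{p}\rfloor=\tfrac sp$ when $p\mid s$. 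All steps check out, including the index bijection $i=\ell+b$ onto the positive multiples of $p-1$ and the finiteness of the sum.
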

	\begin{proof}  If we substitute $m+s+\ell(p-1)$ for $k$ and $mp+s$ for $n$, Congruence \eqref{3.4} becomes
	\begin{align*}
	&\sum_{\underset{p-1 \vert  i}{i>0}}{ m+s-1+\ell(p-1)+i \choose m+s-1+\ell(p-1)}{mp+s \brack i+ m+s+\ell(p-1)}\\
	 &\equiv  \big[p \text{ divides s}\big]{(m-1)p+s+1 \brack  m+s+\ell(p-1)}  \pmod p ,
	\end{align*}
	which, by the appropriate index change, is
	\begin{align*}
	&\sum_{i\ge\ell +1}{ m+s-1+i(p-1)\choose m+s-1+\ell(p-1)}{mp+s \brack m+s+i(p-1)}\\ & \equiv  \big[p \text{ divides s}\big]{(m-1)p+s+1 \brack  m+s+\ell(p-1)}  \pmod p .
	\end{align*}
	To complete the proof, we make use of Corollary \ref{3.1.1.c} and we have
	\begin{align*}
	&\sum_{i\ge\ell +1}{ m+s-1+i(p-1)\choose m+s-1+\ell(p-1)} (-1)^{m-i}{m+\lfloor\frac{s}{p}\rfloor \choose i +\lfloor\frac{s}{p}\rfloor}\\
	& \equiv  \big[p \text{ divides s}\big](-1)^{m-1-\ell}{m-1+\lfloor\frac{s+1}{p}\rfloor \choose \ell +\lfloor\frac{s+1}{p}\rfloor} \pmod p,
	\end{align*}
	which is the claim, since when $p$ divides $s$, we have $\lfloor\frac{s+1}{p}\rfloor=\frac{s}{p}$.
	\end{proof}
	
	\noindent {\bf Remark.}  Clearly, when $s=0$, we obtain 
	\begin{align} \label{3.1.4}
	\sum_{i\ge \ell+1}(-1)^{i} {m \choose i}{m-1+i(p-1) \choose m-1+\ell(p-1)}+ (-1)^{\ell} {m-1 \choose \ell} & \equiv 0 \pmod p, 
	\end{align}
	and whenever $0<s<p$, we have 
	\begin{equation} \label{3.1.5}
	\sum_{i\ge \ell+1}(-1)^{m-i} {m \choose i}{m+s-1+i(p-1) \choose m+s-1+\ell(p-1)} \equiv 0 \pmod p, 
	\end{equation}
	which is our first claim from the introduction.\\
	\ \\ Finally, we also have the following corollary:
	\begin{col}
	For any prime $p$ and natural numbers $\ell, n$ such that $p$ does not divides $n$, we have 
	\begin{equation} \label{3.1.3}
	\sum_{i\ge \ell+1}(-1)^i {n-r\choose i}{n-1+i(p-1) \choose n-1+\ell(p-1)} \equiv 0\pmod p ,
	\end{equation}
	where $r$ is the non-zero residue of the Euclidean division of $n$ by $p$.
	\end{col}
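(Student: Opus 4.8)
The plan is to derive this corollary as a direct specialization of Theorem \ref{3.1.2}. Writing the Euclidean division $n = qp + r$ with $0 < r < p$, so that $r$ is exactly the non-zero residue and $q = \lfloor n/p \rfloor$, I observe that $n - r = qp$ is divisible by $p$. This is what suggests the substitution: in Theorem \ref{3.1.2} I would set $s = r$ and $m = n - r = qp$, keeping the same $\ell$.

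With these choices, $\lfloor s/p \rfloor = \lfloor r/p \rfloor = 0$ because $0 < r < p$, so the first binomial ${m + \lfloor s/p \rfloor \choose i + \lfloor s/p \rfloor}$ collapses to ${n - r \choose i}$. Moreover $m + s - 1 = (n - r) + r - 1 = n - 1$, so the second binomial becomes exactly ${n - 1 + i(p-1) \choose n - 1 + \ell(p-1)}$. On the right-hand side the Iverson bracket $\big[p \text{ divides } s\big] = \big[p \text{ divides } r\big]$ vanishes, again because $0 < r < p$. Hence Theorem \ref{3.1.2} yields $\sum_{i \ge \ell + 1}(-1)^{m-i}{n - r \choose i}{n - 1 + i(p-1) \choose n - 1 + \ell(p-1)} \equiv 0 \pmod p$. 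Finally I would write $(-1)^{m-i} = (-1)^m (-1)^i$ and note that $(-1)^m = \pm 1$ is a unit modulo $p$, so it may be cancelled, leaving precisely the claimed congruence.

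The only point needing a little care is the hypothesis $m > \ell$ under which Theorem \ref{3.1.2} is stated, whereas the corollary imposes no relation between $\ell$ and $n$. When $m = n - r > \ell$ the theorem applies verbatim as above. When instead $m = n - r \le \ell$ (in particular whenever $q = 0$, i.e.\ $n < p$, which forces $m = 0$), every summation index satisfies $i \ge \ell + 1 > n - r$, so the binomial ${n - r \choose i}$ vanishes identically, the sum is empty, and the congruence holds trivially. I do not expect any genuine obstacle: the entire content of the result already lives in Theorem \ref{3.1.2}, and the remaining work is just the bookkeeping of checking that the specialization $s = r$, $m = n - r$ reproduces the stated sum, annihilates the right-hand side, and that the sign $(-1)^m$ can be harmlessly divided out.
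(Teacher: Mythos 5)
Your proof is correct and is essentially the paper's own argument: the paper likewise obtains Congruence \eqref{3.1.3} from Theorem \ref{3.1.2} by setting $s=r$ and substituting $\lfloor n/p\rfloor p = n-r$ for $m$, so that the Iverson bracket kills the right-hand side. Your additional checks — cancelling the unit $(-1)^m$ to match the stated sign, and noting that when $n-r\le\ell$ the sum is empty so the hypothesis $m>\ell$ is harmless — are careful bookkeeping the paper leaves implicit.
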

	\begin{proof} Congruence \eqref{3.1.3} is obtained from Congruence \eqref{3.1.2} in which we let $s$ be
	the residue of the Euclidean division of $n$ by $p$ and we substitute $\lfloor \frac{n}{p}\rfloor p $ for $m$. \end{proof}
	
\section {Adelberg polynomials}
In this section we recall the definition and some properties of the {\it Adelberg polynomials}. Most of the content of this section is taken from  \cite{Adelberg95}. We have the A-Adelberg polynomials and the B-Adelberg polynomials. By definition, the A-Adelberg polynomial $A_{s-1}(x,y,m)$ is \begin{equation}A_{s-1}(x,y,m)\coloneqq \frac{1}{y^m}\sum_{k=0}^m(-1)^{m-k}{m \choose k}{x+ky \choose m+s-1}.\end{equation}  It is the $m$-th {\it divided difference with increment }$y$ of the function $f(x):={x \choose m+ s-1}$. That is 
\begin{equation}\label{a0} A_{s-1}(x,y,m)=\nabla_y^m{x \choose m+ s-1},\end{equation} where $\nabla_yf(x)=\frac{f(x+y)-f(x)}{y}$. The B-Adelberg polynomial $B_{s-1}(y,m)$ is defined as \begin{equation}B_{s-1}(y,m)\coloneqq A_{s-1}(0,y,m)=\frac{1}{y^m}\sum_{k=0}^m(-1)^{m-k}{m \choose k}{ky \choose m+s-1}.\end{equation} $A_{s-1}(x,y,m)$ and $B_{s-1}(y,m)$ are polynomials in $x$, $y$ and, most interestingly, in $m$, of degree $s-1$. We shall re-prove this, after \cite{Adelberg95}, for the sake of self-containment. This proof actually consists in deriving an explicit expression for $B_{s-1}(y,m)$ that will be the key to the proofs of our congruences in the next section. Note also that since the $m$-th difference of a polynomial of degree less than $m$ is zero, we see from these definitions that both $A_{s-1}$ and $B_{s-1}$ vanish when $s<1$. Then, substituting the Vandermonde convolution $ \sum_{j=0}^{m+s-1}{x\choose j}{ky \choose m+s-1-j} = {x+ky \choose m+s-1}$ into the definition of $A_{s-1}(x,y,m)$ we obtain
\begin{equation}\label{a1}A_{s-1}(x,y,m)=\sum_{j=0}^{s-1} {x \choose j}B_{s-1-j}(y,m).\end{equation}

\noindent Now an explicit expression for $B_{u}(y,m)$ is given:
	\begin{thm}\label{Adel1} Let $u$ be a positive integer. We have
	\begin{equation*}
B_u(y,m)=\sum {m\choose t_u}{m-t_u\choose t_{u-1}}\cdot \cdot {m-t_u-t_{u-1}\cdot\cdot-t_2 \choose t_1} \frac{{y-1\choose 1}^{t_1}}{2^{t_1}}\frac{{y-1\choose 2}^{t_2}}{3^{t_2}}\cdot \cdot \frac{{y-1\choose u}^{t_u}}{(u+1)^{t_u}}
	 \end{equation*}
	where the sum is over all the partitions of the integer $u$, that is over all the $u$-uples of non negative integers $(t_1,t_2,...t_u)$ such that $\sum_{i=1}^u it_i=u$. 
	\end{thm}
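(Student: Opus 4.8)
The plan is to pass from the defining alternating sum to a single coefficient of a power series, collapse the sum over $k$ into one $m$-th power, and then read off the coefficient of $x^u$ by the multinomial theorem. First I would write $\binom{ky}{m+u}=[[x^{m+u}]](1+x)^{ky}$ and interchange the coefficient extractor with the finite sum over $k$; by the binomial theorem the sum collapses,
\[
\sum_{k=0}^m(-1)^{m-k}\binom{m}{k}(1+x)^{ky}=\big((1+x)^y-1\big)^m,
\]
so that $B_u(y,m)=\dfrac{1}{y^m}[[x^{m+u}]]\big((1+x)^y-1\big)^m$. This step uses only that $m$ is a natural number, which is all that is required; the polynomiality in $m$ will then be visible from the final formula.

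The second step is to extract the factor that cancels $y^{-m}$. Starting from $(1+x)^y=\sum_{j\ge 0}\binom{y}{j}x^j$ and the absorption identity $\binom{y}{j}=\tfrac{y}{j}\binom{y-1}{j-1}$, I would factor out $yx$ to obtain
\[
(1+x)^y-1=yx\,g(x),\qquad g(x)\coloneqq\sum_{i\ge 0}\frac{1}{i+1}\binom{y-1}{i}x^i,
\]
where $g(x)=1+\sum_{i\ge 1}\frac{1}{i+1}\binom{y-1}{i}x^i$ has constant term $1$. Raising to the $m$-th power gives $\big((1+x)^y-1\big)^m=y^m x^m g(x)^m$; the powers of $y$ cancel and the shift by $x^m$ leaves the clean identity $B_u(y,m)=[[x^u]]\,g(x)^m$.

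Finally I would expand $g(x)^m$ by the multinomial theorem: among the $m$ factors, choosing $t_i$ of them to contribute the term $\tfrac{1}{i+1}\binom{y-1}{i}x^i$ for each $i\ge 1$ and the remaining $m-\sum_i t_i$ to contribute the constant $1$, the coefficient of $x^u$ is the sum over $(t_1,\dots,t_u)$ with $\sum_i i t_i=u$ of
\[
\frac{m!}{\big(m-\sum_i t_i\big)!\,t_1!\cdots t_u!}\prod_{i=1}^u\frac{1}{(i+1)^{t_i}}\binom{y-1}{i}^{t_i},
\]
which is precisely a sum over the partitions of $u$. The last point is to recognize the multinomial coefficient as the advertised nested product $\binom{m}{t_u}\binom{m-t_u}{t_{u-1}}\cdots\binom{m-t_u-\cdots-t_2}{t_1}$, which follows by cancelling the consecutive factorials $(m-t_u)!,(m-t_u-t_{u-1})!,\dots$ and noting that $m-\sum_{i=1}^u t_i$ is exactly the number of factors supplying the constant term.

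I expect the genuine content to lie in the first two steps, namely the collapse of the defining sum into $\big((1+x)^y-1\big)^m$ and the factorization of $yx$ that makes the prefactor $y^{-m}$ disappear and shifts the coefficient index from $m+u$ down to $u$; once the problem is reduced to reading off $[[x^u]]\,g(x)^m$, the identification of the multinomial coefficient with the telescoping product of binomials is routine bookkeeping.
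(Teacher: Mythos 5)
Your proof is correct, but it reaches the partition formula by a genuinely different route than the paper. The paper (following Adelberg) works with the divided-difference structure: it iterates $\nabla_y$ using the Vandermonde convolution $m$ times, obtaining a nested sum over compositions $k_1+\cdots+k_m=r-j$ with positive parts, applies the absorption identity $\binom{y}{k}=\tfrac{y}{k}\binom{y-1}{k-1}$ inside that sum, and then has to compare against Equation \eqref{a1} to isolate $B_u$ from the expansion of $A_u$; only afterwards does it note, as a remark, that Equation \eqref{b1} is equivalent to the generating function $\sum_{u}B_u(y,m)z^u=\left(\frac{(1+z)^y-1}{yz}\right)^m$. You invert this logic: you establish that generating function directly from the definition of $B_u$, by collapsing the alternating sum with the binomial theorem into $\bigl((1+x)^y-1\bigr)^m$ and factoring out $(yx)^m$ via the same absorption identity, and then the partition sum is just the multinomial expansion of $g(x)^m$. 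Your argument is shorter, avoids the $m$-fold nested Vandermonde sums and the detour through $A_u$, and produces the paper's generating function as the centerpiece rather than an afterthought; the paper's route has the advantage of yielding the composition expansion of $A_u(x,y,m)$ (and hence Equation \eqref{a1}'s role) along the way. Two small points you should make explicit: first, $y$ must be treated either as a formal indeterminate (so $(1+x)^y=\sum_j\binom{y}{j}x^j$ lives in $\mathbb{Q}(y)[[x]]$ and division by $y^m$ is legitimate) or as an arbitrary positive integer with polynomiality in $y$ invoked at the end; second, in identifying the multinomial coefficient with the nested product $\binom{m}{t_u}\binom{m-t_u}{t_{u-1}}\cdots$, partitions of $u$ with more than $m$ parts contribute zero on both sides (the nested product vanishes as soon as a lower index exceeds the remaining upper index), so the sum over all partitions agrees with the sum arising from the expansion. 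Both are routine, and the same conventions are implicit in the paper's own grouping of weak compositions into partitions.
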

\noindent This theorem has the following two corollaries:  
\begin{col}\label{Adel2} $B_u(y,m)$ is a polynomial in both $m$ and $y$ of degree $u$  and $A_u(x,y,m)$  is a polynomial of degree $u$ in $m$, in $x$ and in $y$. Moreover, for $u>0$, $m=0$ and $y=1$ are roots of the polynomials  $B_{u}(y,m)$.
\begin{proof} From the explicit expression of Theorem \ref{Adel1}, $B_u(y,m)$ is clearly a polynomial of both $y$ and $m$, and so is $A_u(x,y,m)$ after Equation \eqref{a1}. The degree of $B_u$  is clearly $u$ since  $t_1+t_2+\cdot\cdot+t_u$ is less than or equal to $u$ and the equality is reached for the partition made with $1$s only. Then, from Equation \eqref{a1}, the degree of $A_u$ is $u$ also. Now when $u>0$, $y=1$, is clearly a root of  $B_u$ since no partition of the integer $u>0$ has all the $t_j=0$. Also, for each partition of $u$ positive, there is a maximal $j>0$ such that $t_j \neq 0$, so that ${m \choose t_j}$ factors out of the corresponding summand, and then $m$ factors out of the whole sum. 
\end{proof}
\end {col}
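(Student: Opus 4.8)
\emph{Proof proposal.} The plan is to read every assertion directly off the explicit partition-sum formula of Theorem \ref{Adel1}, viewing $B_u(y,m)$ as an element of $\mathbb{Q}[m,y]$. In each summand the binomial chain ${m\choose t_u}{m-t_u\choose t_{u-1}}\cdots{m-t_u-\cdots-t_2 \choose t_1}$ is a product of terms of the shape ${m-c \choose t_j}$, where $c$ is a non-negative integer constant and $t_j$ is a fixed non-negative integer; each such term is a polynomial in $m$ of degree $t_j$, so the chain is a polynomial in $m$ of degree $\sum_j t_j$. Likewise each factor ${y-1\choose j}^{t_j}$ is a polynomial in $y$ of degree $jt_j$, so the $y$-part of the summand has degree $\sum_j jt_j$. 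Hence every summand, and therefore $B_u$, is a polynomial in $m$ and $y$. Since every part satisfies $j\ge 1$, we get $\sum_j t_j \le \sum_j jt_j = u$, giving $\deg_m B_u \le u$, while $\sum_j jt_j = u$ for every partition gives $\deg_y B_u \le u$.

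To upgrade these bounds to equalities I would isolate the monomial $m^uy^u$. The $m$-degree $\sum_j t_j$ attains $u$ only for the partition into $u$ ones ($t_1=u$, $t_2=\cdots=t_u=0$), for which the summand collapses to ${m\choose u}\,\frac{(y-1)^u}{2^u}$; its coefficient of $m^uy^u$ is $\frac{1}{2^u u!}\ne 0$. Every other partition has $m$-degree strictly below $u$ and so cannot contribute to $m^u$ at all, so no cancellation is possible. Thus the coefficient of $m^uy^u$ in $B_u$ equals $\frac{1}{2^u u!}\ne 0$, which forces $\deg_m B_u\ge u$ and $\deg_y B_u\ge u$ at once; together with the upper bounds this yields $\deg_m B_u=\deg_y B_u=u$.

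For $A_u(x,y,m)$ I would invoke Equation \eqref{a1} in the form $A_u(x,y,m)=\sum_{j=0}^u {x\choose j}B_{u-j}(y,m)$. Each ${x\choose j}$ is a polynomial in $x$ of degree $j$ and, by the first part, each $B_{u-j}$ is a polynomial in $y,m$ of degree $u-j$, so $A_u$ is a polynomial in $x,y,m$. Its $m$- and $y$-degrees are governed by the single term $j=0$, namely $B_u$ itself, of degree $u$ in each of $m,y$, every other term having strictly smaller degree; hence $\deg_m A_u=\deg_y A_u=u$. For the $x$-degree the top term is $j=u$, that is ${x\choose u}B_0(y,m)$, and the empty partition gives $B_0=1\ne 0$, so this term contributes exactly ${x\choose u}$, of $x$-degree $u$, while all $j<u$ give $x$-degree below $u$; therefore $\deg_x A_u=u$.

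Finally, for $u>0$ both roots follow from summand-wise vanishing. At $y=1$ every factor ${y-1\choose j}$ with $j\ge 1$ becomes ${0\choose j}=0$, and since $\sum_j jt_j=u>0$ forces some $t_j\ge 1$ in each partition, every summand vanishes, so $B_u(1,m)=0$ identically in $m$ and $y=1$ is a root. At $m=0$, for each partition let $j_0$ be the largest index with $t_{j_0}\ge 1$ (which exists as $u>0$); all higher parts vanish, so the binomial in the chain associated with $t_{j_0}$ has upper entry $m-\sum_{\ell>j_0}t_\ell=m$ and equals ${m\choose t_{j_0}}$, which is ${0\choose t_{j_0}}=0$ at $m=0$. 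Hence each summand vanishes there and $B_u(y,0)=0$ identically in $y$, so $m=0$ is a root. I expect the only genuinely delicate point to be the exactness of the degrees, i.e.\ ruling out cancellation among top-degree terms; the single computation of the $m^uy^u$ coefficient settles this cleanly, and polynomiality, the degree bounds, the transfer to $A_u$, and the two roots are all immediate from the shape of the formula.
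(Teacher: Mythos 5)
Your proof is correct and follows essentially the same route as the paper: every claim is read off the explicit partition-sum formula of Theorem \ref{Adel1}, with the degree pinned down by the all-ones partition, the transfer to $A_u$ done through Equation \eqref{a1}, and the roots $y=1$ and $m=0$ obtained by summand-wise vanishing (the latter via the maximal index $j$ with $t_j\neq 0$), exactly as in the paper. Your added care --- computing the coefficient of $m^uy^u$ to rule out cancellation among top-degree terms and tracking the $x$-degree of $A_u$ explicitly --- merely makes rigorous what the paper's proof leaves implicit.
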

\begin{col}\label{Adel3} Let $p$ be prime, let $m,x \in  \mathbb{Z}$ and let $s$ be an integer such that $0<s<p$. We have $B_{s-1}(p,m) \in \mathbb{Z}$ and $A_{s-1}(x,p,m) \in \mathbb{Z}$ .
\begin{proof} We only need a proof for the B-polynomial, since the proof for the A-polynomial will then follow from Equation \eqref{a1}. In the explicit expression of $B_{s-1}(p,m)$ from Theorem \ref{Adel1}, each factor $\frac{{p-1\choose j}}{j+1}=\frac{{p\choose j+1}}{p}$ is clearly and integer when $s-1 <p-1$, since $j\le s-1$.
\end{proof}
\end {col}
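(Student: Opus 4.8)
The plan is to establish the statement for the B-polynomial first and then deduce the result for the A-polynomial from the decomposition \eqref{a1}. Indeed, once I know that $B_u(p,m)\in\mathbb{Z}$ for every $0\le u\le s-1$, the formula
$$A_{s-1}(x,p,m)=\sum_{j=0}^{s-1} {x \choose j}B_{s-1-j}(p,m)$$
exhibits $A_{s-1}(x,p,m)$ as a sum of products of the integer ${x\choose j}$ (an integer since $x\in\mathbb{Z}$ and $j\ge0$) with the integers $B_{s-1-j}(p,m)$, hence $A_{s-1}(x,p,m)\in\mathbb{Z}$. So the whole matter reduces to the B-polynomial, and I focus on it.

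For $B_u(p,m)$ I would start from the explicit expression of Theorem \ref{Adel1} specialized at $y=p$:
$$B_u(p,m)=\sum {m\choose t_u}{m-t_u\choose t_{u-1}}\cdots {m-t_u-\cdots-t_2 \choose t_1}\,\prod_{j=1}^{u}\left(\frac{{p-1\choose j}}{j+1}\right)^{t_j},$$
the sum running over all partitions $(t_1,\dots,t_u)$ of $u$. The string of multinomial factors ${m\choose t_u}{m-t_u\choose t_{u-1}}\cdots$ is a product of ordinary binomial coefficients whose upper arguments are integers and whose lower arguments are non-negative, so it is an integer regardless of the sign of $m$. It therefore remains only to show that each base $\frac{{p-1\choose j}}{j+1}$ appearing in the product is an integer, for the indices $1\le j\le u\le s-1$.

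This last point is the crux, and it is where the primality of $p$ and the hypothesis $0<s<p$ enter. I would invoke the elementary identity
$$\frac{{p-1\choose j}}{j+1}=\frac{{p\choose j+1}}{p},$$
which follows at once from ${p\choose j+1}=\frac{p}{j+1}{p-1\choose j}$. Since $1\le j\le s-1\le p-2$, the index satisfies $2\le j+1\le p-1$; for a prime $p$ the binomial coefficient ${p\choose j+1}$ is then divisible by $p$, so $\frac{{p\choose j+1}}{p}$ is an integer. Consequently every factor $\bigl(\frac{{p-1\choose j}}{j+1}\bigr)^{t_j}$ is an integer, each summand of $B_u(p,m)$ is an integer, and $B_u(p,m)\in\mathbb{Z}$ for all $0\le u\le s-1$. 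Everything except this divisibility is routine recognition of products of binomial coefficients as integers; the role of the bound $s<p$ is precisely to keep $j+1$ strictly below $p$ so that the primality argument applies.
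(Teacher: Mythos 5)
Your proposal is correct and follows essentially the same route as the paper: reduce to the B-polynomial via Equation \eqref{a1}, then apply the explicit expression of Theorem \ref{Adel1} at $y=p$ and the identity $\frac{\binom{p-1}{j}}{j+1}=\frac{\binom{p}{j+1}}{p}$, with the hypothesis $s<p$ guaranteeing $j+1\le p-1$ so that primality forces $p\mid\binom{p}{j+1}$. You merely spell out the divisibility step that the paper calls ``clear,'' which is a faithful filling-in rather than a different argument.
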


\noindent{\it Proof of Theorem \ref{Adel1}.} This is the proof from \cite{Adelberg95}. Introducing the Vandermonde convolution again, the first divided difference with increment $y$ of ${x \choose r}$ is 
$$ \nabla_y{x \choose r}=\frac{{x+y \choose r}-{x \choose r}}{y}=\frac{\sum_{j=0}^r{x \choose j}{y \choose r-j}-{x \choose r}}{y}=\frac{\sum_{j=0}^{r-1}{x \choose j}{y \choose r-j}}{y}.$$ Then, iterating $m$ times the divided difference operator, we have
\begin{align*} \nabla_y^m{x \choose r}&=\frac{1}{y^m}\sum_{j_1=0}^{r-1}\sum_{j_2=0}^{j_1-1}\cdot \cdot \sum_{j_m=0}^{j_{m-1}-1}{x \choose j_m}{y \choose j_m-j_{m-1}}\cdot \cdot{y \choose j_2-j_1}{y \choose r-j_1}\\
&=\frac{1}{y^m}\sum_{j=0}^{r-m}{x \choose j}\sum_{\underset{k_1+k_2\cdot\cdot+k_m=r-j}{k_i>0}}{y \choose k_m}\cdot \cdot{y \choose k_2}{y \choose k_1},
\end{align*}
where the inner sum in the last line is over all the $m$-uples of positive integers whose sum is $r-m$. Then by Equation \eqref{a0}, we have
\begin{align*}
A_{u}(x,y,m)&=\sum_{j=0}^{u}{x \choose j}\frac{1}{y^m}\sum_{\underset{k_1+k_2\cdot\cdot+k_m=m+u-j}{k_i>0}}{y \choose k_m}\cdot \cdot{y \choose k_2}{y \choose k_1}\\
&=\sum_{j=0}^{u}{x \choose j}\sum_{\underset{k_1+k_2\cdot\cdot+k_m=m+u-j}{k_i>0}}\frac{{y-1 \choose k_m-1}\cdot \cdot{y-1 \choose k_2-1}{y-1 \choose k_1-1}}{k_m\cdot\cdot k_2k_1}\\
&=\sum_{j=0}^{u}{x \choose j}\sum_{\underset{k_1+k_2\cdot\cdot+k_m=u-j}{k_i\ge0}}\frac{{y-1 \choose k_m}\cdot \cdot{y-1 \choose k_2}{y-1 \choose k_1}}{(k_m+1)\cdot\cdot (k_2+1)(k_1+1)}.
\end{align*}
Then, by comparison with Equation \eqref{a1}, we have 
\begin{equation} 
\label{b1}
B_{u}(y,m)=\sum_{\underset{k_1+k_2\cdot\cdot+k_m=u}{k_i\ge0}}\frac{{y-1 \choose k_m}\cdot \cdot{y-1 \choose k_2}{y-1 \choose k_1}}{(k_m+1)\cdot\cdot (k_2+1)(k_1+1)}
\end{equation}
where the sum is over all the {\it weak compositions} of $u$ with $m$ non negative integers. But it is well known that there are ${m \choose t_0,t_1,\cdot\cdot,t_m}=\frac{m!}{t_0!t_1!\cdot\cdot t_m!}$ weak compositions of $u$ which produce the same unique {\it partition} of $u$ in $m-t_0$ summands, such that $u=0\cdot t_0+1\cdot t_1+\cdot\cdot\cdot+m\cdot t_m$ and $m=t_0+t_1+\cdot\cdot+t_m$ . Then
\begin{equation} \label{adelalt}
B_{u}(y,m)=\sum_{\underset{t_1+t_2\cdot\cdot+t_{s-1}=m-t_0}{t_1+2t_2\cdot\cdot+ut_{u}=u}}\frac{m!}{t_0!t_1!\cdot\cdot t_{u}!}\left(\frac{{y-1 \choose 0}}{1}\right)^{t_0}\cdot \cdot\left(\frac{{y-1 \choose u}}{u+1}\right)^{t_{u}}.
\end{equation}
Now, the multinomial coefficient $\frac{m!}{t_0!t_1!\cdot\cdot t_m!}$ can be expanded as a finite product of binomial coefficients so that
	\begin{equation*}
B_u(y,m)=\sum {m\choose t_u}{m-t_u\choose t_{u-1}}\cdot \cdot {m-t_u-t_{u-1}\cdot\cdot-t_2 \choose t_1} \frac{{y-1\choose 1}^{t_1}}{2^{t_1}}\frac{{y-1\choose 2}^{t_2}}{3^{t_2}}\cdot \cdot \frac{{y-1\choose u}^{t_u}}{(u+1)^{t_u}}
	 \end{equation*}
	where the sum is over all the partitions of the integer $u$, that is over all the $u$-uples of non negative integers $(t_1,t_2,...t_u)$ such that $\sum_{i=1}^u it_i=u$. 
\qed 
\\ \ \\
\indent From Equation \eqref{b1}, we see that $B_u(y,m)$ is the coefficient of $z^u$ in the power series expansion of $\left(\sum_j \frac{{y-1 \choose j}}{j+1}z^j\right)^m=\left(\frac{(1+z)^y-1}{yz}\right)^m$, so that we have the following generating functions for the Adelberg polynomials \cite{Adelberg95}:
\begin{equation}
\sum_{u \ge 0} B_u(y,m)z^u=\left(\frac{(1+z)^y-1}{yz}\right)^m
\end{equation}
\begin{equation} \label{gf2}
\sum_{u \ge 0} A_u(x,y,m)z^u=\left(1+z\right)^x\left(\frac{(1+z)^y-1}{yz}\right)^m
\end{equation}
where the latter is obtained from the former after accounting for Equation \eqref{a1}. Adelberg gives many {\it symetries} (or identities) for his polynomials. We will need the symetry (Sxvi) from \cite{Adelberg95}. It reads
\begin{equation} \label{s3}
\sum_{j \ge 0}{m \choose j}y^jA_u(x+j,y,j)=(y+1)^mA_u(x,y+1,j,m)
\end{equation}
and is obtained with the generating function \eqref{gf2}, as explained in \cite{Adelberg95}: we just need to verify that 
$ \sum_{j \ge 0}{m \choose j}y^j(1+z)^{x+j}\left(\frac{(1+z)^y-1}{yz}\right)^j=(y+1)^m(1+z)^x \left(\frac{(1+z)^{y+1}-1}{(y+1)z}\right)^m$, which is elementary.
 
\section {The second and third congruences}
We are now ready for our second and third congruences.
\begin{thm} Let $p$ be prime and $\ell, m, s$ non-negative integers such that $0<s<p$. We have 
\begin{equation} \label{second}\sum_{i\ge 0}(-1)^{m-i} {m \choose i}{\ell+ip \choose m+s-1}\equiv 0 \pmod { p^m} \end{equation}
and the quotient is an integer-valued polynomial function of $m$ of degree $s-1$.
\begin{proof} Everything has already been done in the previous section: by definition, the quotient is the Adelberg polynomial $A_{s-1}(\ell,p,m)$ which is integer-valued by Corollary \ref{Adel3}.
\end{proof}
\end{thm}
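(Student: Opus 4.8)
The plan is to read the left-hand side directly as an instance of the A-Adelberg polynomial and then to quote the integrality and degree facts established in the previous section, so that no fresh computation is required.

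First I would align the sum with the definition of $A_{s-1}$. Since $\binom{m}{i}=0$ for $i>m$ and for $i<0$, the range $i\ge 0$ in the claim coincides with $0\le i\le m$, and renaming $i$ to $k$ the claimed sum is exactly the numerator in
\[
A_{s-1}(x,y,m)=\frac{1}{y^m}\sum_{k=0}^m(-1)^{m-k}\binom{m}{k}\binom{x+ky}{m+s-1}
\]
evaluated at $x=\ell$, $y=p$. Clearing the denominator $y^m=p^m$ gives the exact closed form
\[
\sum_{i\ge 0}(-1)^{m-i}\binom{m}{i}\binom{\ell+ip}{m+s-1}=p^m\,A_{s-1}(\ell,p,m),
\]
so the congruence reduces entirely to the arithmetic nature of the factor $A_{s-1}(\ell,p,m)$.

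Next I would invoke Corollary \ref{Adel3} with $x=\ell$, $y=p$: because $0<s<p$ and $\ell,m\in\mathbb{Z}$, it guarantees $A_{s-1}(\ell,p,m)\in\mathbb{Z}$. Multiplying by $p^m$ then yields divisibility by $p^m$, proving the congruence and simultaneously identifying the quotient as $A_{s-1}(\ell,p,m)$. Finally, for the structure of the quotient I would appeal to Corollary \ref{Adel2}, which states that $A_{s-1}(x,y,m)$ is a polynomial of degree $s-1$ in $m$; together with the integrality just used, this says precisely that the quotient is an integer-valued polynomial in $m$ of degree $s-1$.

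There is essentially no obstacle internal to this theorem: all the difficulty has been front-loaded into Theorem \ref{Adel1} and its corollaries. The one point worth emphasizing is where the hypothesis $0<s<p$ enters, namely in Corollary \ref{Adel3}: it is exactly the condition $j\le s-1<p-1$ that makes each factor $\binom{p-1}{j}/(j+1)=\binom{p}{j+1}/p$ an integer in the explicit formula of Theorem \ref{Adel1}. Outside this range the $A$-polynomial need not be $p$-integral and the congruence can fail, which is consistent with the closed-form viewpoint advertised in the abstract.
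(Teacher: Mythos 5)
Your proposal is correct and follows exactly the paper's own argument: the sum is by definition $p^m A_{s-1}(\ell,p,m)$, integrality of the quotient comes from Corollary \ref{Adel3} (which is where $0<s<p$ is used), and the degree statement comes from Corollary \ref{Adel2}. You have merely spelled out the steps that the paper's one-line proof leaves implicit.
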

\indent Our third congruence requires slightly more work, as we will need the following generalization of Corollary \ref{Adel3}:
	\begin{thm} \label{gen} Let $p$ be prime, $\ell,m$ non-negative integers and $s$ an integer such that $0<s<p$. We have  $p^\ell B_{\ell(p-1)+s-1}(p,m) \in \mathbb{Z}$ and $p^\ell A_{\ell(p-1)+s-1}(s-1,p,m) \in \mathbb{Z}$. In other words, for any non negative integer $u$, $r$ being the residue of the Euclidean division of $u$ by $p-1$, we have  $p^{\lfloor \frac{u}{p-1}\rfloor}B_u(p,m) \in \mathbb{Z}$ and $p^{\lfloor \frac{u}{p-1}\rfloor}A_{u}(r,p,m) \in \mathbb{Z}$.
	\begin{proof} We may rewrite slightly differently Equation \eqref{adelalt} for $B_u(y,m)$ in the case where $y=p$ a prime number :
	\begin{align*} 
	B_{u}(p,m)&=\sum_{t_1+2t_2\cdot\cdot+ut_{u}=u}\frac{m!}{t_0!t_1!\cdot\cdot t_{u}!}\left(\frac{{p \choose 2}}{p}\right)^{t_1}\cdot \cdot\left(\frac{{p \choose u+1}}{p}\right)^{t_{u}}.
	\end{align*}
	The partitions  of $u$ for which there exist $j\ge p$ such that $t_j \neq 0$ do not contribute to the sum because when $j\ge p$, we have ${p \choose j+1}=0$ and then we have 
	\begin{align*} 
	B_{u}(p,m)&=\sum_{t_1+2t_2\cdot\cdot+(p-1)t_{p-1}=u}\frac{m!}{t_0!t_1!\cdot\cdot t_{p-1}!}\left(\frac{{p \choose 2}}{p}\right)^{t_1}\cdot \cdot\left(\frac{{p \choose p}}{p}\right)^{t_{p-1}}\\
	&=\sum_{t_1+2t_2\cdot\cdot+(p-1)t_{p-1}=u}\frac{m!}{t_0!t_1!\cdot\cdot t_{p-1}!}\left(\frac{{p \choose 2}}{p}\right)^{t_1}\cdot \cdot\left(\frac{{p \choose p-1}}{p}\right)^{t_{p-1}}\frac{1}{p^{t_{p-1}}}.
	\end{align*}
	Now, we let $u=\ell(p-1)+s-1$ and we obtain 
	
	\begin{align*} 
	p^\ell B_{\ell(p-1)+s-1}(p,m)&=\sum_{}\frac{m!}{t_0!t_1!\cdot\cdot t_{p-1}!}\left(\frac{{p \choose 2}}{p}\right)^{t_1}\cdot \cdot\left(\frac{{p \choose p-1}}{p}\right)^{t_{p-1}}p^{\ell - t_{p-1}}
	\end{align*}
	where the sum is over all the $(p-1)$-uples of non-negative integers ($t_1,\cdot \cdot t_{p-1}$) such that $t_1+2t_2\cdot\cdot+(p-1)t_{p-1}=\ell(p-1)+s-1$. We see that $p^\ell B_{\ell(p-1)+s-1}(p,m) \in \mathbb{Z}$ because if $ t_{p-1} \ge \ell+1$ then we would have $\ell(p-1)+s-1 \ge (p-1)t_{p-1}\ge \ell(p-1)+p-1$, which is not possible since it is supposed that $s<p$. Now it follows that $p^\ell A_{\ell(p-1)+s-1}(s-1,p,m)$ is integer because
	\begin{align*}
	p^\ell A_{\ell(p-1)+s-1}(s-1,p,m)&=\sum_{j=0}^{\ell(p-1)+s-1}{s-1\choose j} p^\ell B_{\ell(p-1)+s-1-j}(p,m)\\
	&=\sum_{j=0}^{s-1}{s-1 \choose j} p^\ell B_{\ell(p-1)+s-1-j}(p,m).
	\end{align*}
	\end{proof}
	\end{thm}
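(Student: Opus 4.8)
The plan is to build directly on the explicit partition formula for $B_u(p,m)$, refining the very computation that already established Corollary \ref{Adel3} (which is the case $\ell=0$). Starting from Equation \eqref{adelalt} and specializing $y=p$, I would first record the elementary identity $\frac{\binom{p-1}{j}}{j+1}=\frac{\binom{p}{j+1}}{p}$, so that every factor entering $B_u(p,m)$ has the shape $\binom{p}{j+1}/p$. Since $\binom{p}{j+1}=0$ whenever $j\ge p$, only indices $j\le p-1$ can contribute a nonzero summand, which collapses the partition sum to one indexed by $(p-1)$-uples $(t_1,\dots,t_{p-1})$ subject to $\sum_{j} j\,t_j=u$.

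The arithmetic heart of the argument is the observation that $\binom{p}{j+1}$ is divisible by $p$ whenever $1\le j+1\le p-1$, so $\binom{p}{j+1}/p$ is an \emph{integer} for every $j\le p-2$; the single exceptional factor occurs at $j=p-1$, where $\binom{p}{p}/p=1/p$. Consequently the only denominator appearing in a given partition-term is $p^{t_{p-1}}$, the multinomial coefficient $\frac{m!}{t_0!\cdots t_{p-1}!}$ being an integer. Thus $B_u(p,m)$ is, term by term, an integer divided by $p^{t_{p-1}}$, and the whole problem reduces to bounding the exponent $t_{p-1}$.

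This bound is exactly where the hypothesis $s<p$ enters. Writing $u=\ell(p-1)+s-1$ with $0\le s-1\le p-2$, the constraint $\sum_j j\,t_j=u$ forces $(p-1)\,t_{p-1}\le u<(\ell+1)(p-1)$, hence $t_{p-1}\le\ell$. Therefore $p^{\ell}\cdot p^{-t_{p-1}}=p^{\ell-t_{p-1}}$ is a nonnegative power of $p$, every term of $p^{\ell}B_u(p,m)$ is an integer, and $p^{\ell}B_{\ell(p-1)+s-1}(p,m)\in\mathbb{Z}$. Since $\ell=\lfloor u/(p-1)\rfloor$ under this normalization, this is precisely the stated ``in other words'' form.

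For the A-polynomial I would simply propagate the B-result through Equation \eqref{a1}, which yields $A_{\ell(p-1)+s-1}(s-1,p,m)=\sum_{j=0}^{s-1}\binom{s-1}{j}B_{\ell(p-1)+s-1-j}(p,m)$. For each $0\le j\le s-1$ the shifted index $\ell(p-1)+s-1-j$ still has quotient $\ell$ on division by $p-1$ (its residue $s-1-j$ lies in $[0,p-2]$), so the B-result gives $p^{\ell}B_{\ell(p-1)+s-1-j}(p,m)\in\mathbb{Z}$; multiplying by the integers $\binom{s-1}{j}$ and summing yields $p^{\ell}A_{\ell(p-1)+s-1}(s-1,p,m)\in\mathbb{Z}$. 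The main obstacle is the first reduction step: one must notice that all but one of the factors $\binom{p}{j+1}/p$ are $p$-integral, so the $p$-adic denominator of each partition-term is governed by the single exponent $t_{p-1}$, and then match this against the combinatorial bound $t_{p-1}\le\ell$ forced by $0<s<p$.
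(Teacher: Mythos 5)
Your proposal is correct and follows essentially the same route as the paper: specialize Equation \eqref{adelalt} at $y=p$, use $\binom{p-1}{j}/(j+1)=\binom{p}{j+1}/p$ to kill all terms with $t_j\neq 0$ for $j\ge p$, isolate the single non-$p$-integral factor $\binom{p}{p}/p$, bound $t_{p-1}\le\ell$ from $(p-1)t_{p-1}\le \ell(p-1)+s-1$ with $s<p$, and then transfer the result to the A-polynomial via Equation \eqref{a1}. Your write-up even makes explicit the $p$-integrality of the factors $\binom{p}{j+1}/p$ for $j\le p-2$, which the paper leaves implicit.
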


\begin{thm} \label{cong3} Let $p$ be prime and $\ell, m, s$ integers such that $0<s<p$. We have 
\begin{align*}\sum_{j,i\ge \ell}(-1)^{j-i}{m \choose j} {j \choose i}{j+s-1+i(p-1) \choose j+s-1+\ell(p-1)}&\equiv 0 \pmod { p^{m-\ell}}. \end{align*}
The quotient is an integer-valued polynomial function of $m$ of degree $s-1+\ell(p-1)$.
\begin{proof} We start from Equation \eqref{s3} where we let $y=p-1$ and $u=\ell(p-1)+s-1$. We replace the Adelberg polynomial on the right hand side by its original definition, rearrange the sums and then we obtain
\begin{align*}
\sum_{j,i\ge \ell}(-1)^{j-i}{m \choose j} {j \choose i}{j+s-1+i(p-1) \choose j+s-1+\ell(p-1)}&=p^mA_{\ell(p-1)+s-1}(s-1,p,m)\\
&=p^{m-\ell}p^\ell A_{\ell(p-1)+s-1}(s-1,p,m).
\end{align*}
The claim then follows from Theorem \ref{gen}.
\end{proof}
\end{thm}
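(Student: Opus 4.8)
The plan is to read the claimed identity directly off the Adelberg symmetry \eqref{s3}, and then to control the $p$-adic valuation of the resulting right-hand side by means of Theorem \ref{gen}.

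First I would specialise \eqref{s3} by setting $y=p-1$ (so that $y+1=p$), $x=s-1$, and $u=\ell(p-1)+s-1$. This turns the symmetry into
\begin{equation*}
\sum_{j \ge 0}{m \choose j}(p-1)^j A_u(s-1+j,p-1,j)=p^m A_u(s-1,p,m).
\end{equation*}
Next I would expand each factor $A_u(s-1+j,p-1,j)$ on the left using the defining formula for the A-Adelberg polynomial, namely $A_u(x,y,m)=y^{-m}\sum_{k}(-1)^{m-k}{m \choose k}{x+ky \choose m+u}$. The crucial bookkeeping point is that the prefactor $(p-1)^{-j}$ coming from this definition exactly cancels the weight $(p-1)^j$ already present in the symmetry; after this cancellation the left-hand side becomes the double sum
\begin{equation*}
\sum_{j \ge 0}\sum_{i=0}^{j}(-1)^{j-i}{m \choose j}{j \choose i}{j+s-1+i(p-1) \choose j+s-1+\ell(p-1)},
\end{equation*}
where I have used $j+u=j+s-1+\ell(p-1)$ and $(s-1+j)+i(p-1)=j+s-1+i(p-1)$.

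The remaining step on the combinatorial side is to justify replacing the index range by $j,i\ge\ell$, as in the statement. Here I would simply observe that the summand vanishes outside that range: the factor ${j \choose i}$ already forces $0\le i\le j$, while the bottom entry $j+s-1+\ell(p-1)$ of the last binomial coefficient is nonnegative and its top entry $j+s-1+i(p-1)$ is also nonnegative, so the summand vanishes unless the top is at least as large as the bottom, i.e.\ unless $i\ge\ell$; together these force $j\ge i\ge\ell$, so restricting the summation to $j,i\ge\ell$ changes nothing. Thus the sum in the theorem equals $p^mA_{\ell(p-1)+s-1}(s-1,p,m)$, which I would write as $p^{m-\ell}\big(p^\ell A_{\ell(p-1)+s-1}(s-1,p,m)\big)$.

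To finish, I would invoke Theorem \ref{gen}, which guarantees exactly that $p^\ell A_{\ell(p-1)+s-1}(s-1,p,m)\in\mathbb{Z}$; this immediately yields the congruence modulo $p^{m-\ell}$ and exhibits the quotient as $p^\ell A_{\ell(p-1)+s-1}(s-1,p,m)$. For the degree claim I would appeal to Corollary \ref{Adel2}, by which $A_u(\cdot,\cdot,m)$ has degree $u$ in $m$; multiplying by the constant $p^\ell$ leaves the degree at $u=\ell(p-1)+s-1$, while integer-valuedness in $m$ is furnished by Theorem \ref{gen}. The only genuine content here is imported: all the difficulty has been front-loaded into the integrality statement of Theorem \ref{gen} and the verification of the symmetry \eqref{s3}, so the main thing to get right in this proof is the clean cancellation of the $(p-1)^j$ factors and the harmless adjustment of the summation range, and I do not expect a real obstacle beyond that bookkeeping.
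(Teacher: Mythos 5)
Your proposal is correct and follows exactly the paper's route: specialize the symmetry \eqref{s3} at $y=p-1$, $x=s-1$, $u=\ell(p-1)+s-1$, expand the Adelberg polynomials via their definition so that the $(p-1)^{\pm j}$ factors cancel, identify the resulting double sum with $p^{m}A_{\ell(p-1)+s-1}(s-1,p,m)=p^{m-\ell}\bigl(p^{\ell}A_{\ell(p-1)+s-1}(s-1,p,m)\bigr)$, and conclude by Theorem \ref{gen} (with the degree claim from Corollary \ref{Adel2}). You merely spell out the bookkeeping the paper compresses into ``rearrange the sums,'' including the correct observation that the summand vanishes unless $j\ge i\ge\ell$, which justifies the summation range in the statement.
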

\noindent {\bf Remark.} When $0 \le n <\ell$, it is clear that $A_{\ell(p-1)+s-1}(s-1,p,n)=0$ and  we have $p^\ell A_{\ell(p-1)+s-1}(s-1,p,\ell)=1$. Let 
$$s_{p,s,\ell}(m) \coloneqq \sum_{i\ge \ell}(-1)^{m-i} {m \choose i}{m+s-1+i(p-1) \choose m+ s-1+\ell(p-1)}.$$ 
We have 
$$p^mA_{\ell(p-1)+s-1}(s-1,p,m)=\sum_j {m \choose j}s_{p,s,\ell}(j)$$ 
so that $p^mA_{\ell(p-1)+s-1}(s-1,p,m)$ is a {\it binomial transform} of $s_{p,s,\ell}(m)$ where we consider $ s_{p,s,\ell}(m)$ as an integer sequence with index $m$. Equivalently, by binomial inversion, we have 
        \begin{align*} 
	s_{p,s,\ell}(m)&= \sum_{j}(-1)^{m-j}{m \choose j} p^jA_{\ell(p-1)+s-1}(s-1,p,j)\\
	&= \sum_{j\ge \ell}(-1)^{m-j}{m \choose j} p^{j-\ell}p^\ell A_{\ell(p-1)+s-1}(s-1,p,j)\\
	&= (-1)^{m-\ell}{m\choose \ell}+\sum_{j\ge 1}p^j (-1)^{m-j-\ell}{m \choose j +\ell} p^\ell A_{\ell(p-1)+s-1}(s-1,p,j+\ell)
	\end{align*}
	But the right hand side of Congruence \eqref{3.1.5} is $s_{p,s,\ell}(m)-(-1)^{m-\ell} {m \choose \ell}$ and then our first congruence also follows from the above derivation.
\section {Examples and final remarks}
With appropriate changes of variables and index, the Adelberg polynomials are actually degenerate Stirling or Bernoulli polynomials of arbitrary order \cite{Adelberg95}. But Adelberg writes that his polynomials have {\it a more combinatorial flavor} and {\it show more of the landscape}. His approach also provides an effective way to compute our lacunary sums of products of binomial coefficient, via Theorem \ref{Adel1} and Equation \eqref{a1}. The result of such computations for the first few Adelberg polynomials is displayed in the following tables:    
{\setlength{\tabcolsep}{3pt}
{\center  {\small
	\begin{tabular}{|c|l|}
	\hline
	$u$  & $B_u(y,m)$ \\
	\hline
	& \\
	$0$ & $1$\\
	& \\
	$1$ & $\frac{1}{2}m (-1+y)$\\
	& \\
	$2$ & $\frac{1}{24} m (-1+y) (-5-3 m+y+3 m y)$\\
	& \\
	$3$ & $\frac{1}{48}m (-1+y) (-2-m+m y) (-3-m+y+m y)$\\
	& \\
	$4$& $\frac{1}{5760}m(-1 +y)( -502-485 m-150 m^2-15 m^3+218 y+655 m y+330 m^2y+ 45 m^3y$\\
	& \text{\ \ \ \ \ \ \ \ \ \ \ \ \ \ \ \ \ \ \ \ \ }$-2 y^2-175 m y^2-210 m^2 y^2-45 m^3 y^2-2 y^3+5 m y^3+30 m^2 y^3$\\ 
	& \text{\ \ \ \ \ \ \ \ \ \ \ \ \ \ \ \ \ \ \ \ \ }$+15 m^3 y^3)$\\
	& \\
	\hline 
	\end{tabular} }
\endcenter}
{\center {\bf Table 1:}   The first five B-Adelberg polynomials.\endcenter}
{\setlength{\tabcolsep}{3pt}
{\center  {\small
	\begin{tabular}{|c|l|}
	\hline
	$u$  & $A_u(x,y,m)$ \\
	\hline
	& \\
	$0$ & $1$\\
	& \\
	$1$ & $\frac{1}{2}(-m+2 x+m y)$\\
	& \\
	$2$ & $\frac{1}{24}(5 m+3 m^2-12 x-12 m x+12 x^2-6 m y-6 m^2 y+12 m x y+m y^2+3 m^2 y^2)$\\
	& \\
	$3$ & $\frac{(-2-m+2 x+m y)}{48} (3 m+m^2-8 x-4 m x+4 x^2-4 m y-2 m^2 y+4 m x y+m y^2+m^2 y^2)$\\
	& \\
	\hline 
	\end{tabular} }
\endcenter}
{\center {\bf Table 2:}   The first four A-Adelberg polynomials.\endcenter}
\ \\
\indent We can also write many impressive-looking binomial identities like for instance 
\begin{align*} \label{example}&\sum_{i\ge 0}(-1)^{m-i} {m \choose i}{\ell+in \choose m+2}=\frac{n^m}{24}\big(5 m+3 m^2-12 \ell-12 m \ell+12 \ell^2-6 m n-6 m^2 n\\
& \text{\ \ \ \ \ \ \ \ \ \ \ \ \ \ \ \ \ \ \ \ \ \ \ \ \ \ \ \ \ \ \ \ \ \ \ \ \ \ \ \ \ \ \ \ }+12 m \ell n+m n^2+3 m^2 n^2\big)\end{align*}
or 
\begin{align*}
\sum_{j,i\ge0}(-1)^{j-i}{m \choose j}{j \choose i}{j+\small{\text{5}}+ \small{\text{6}}i \choose j+\small{\text{5}}}&= \small{\text{7}}^{m}\frac{(m\small{\text{+1}})(\small{\text{81}}m^4\small{\text{+684}}m^3\small{\text{+1401}}m^2\small{\text{+434}}m\small{\text{+40}})}{\small{\text{40}}}.
\end{align*}
The polynomial part on the right hand side of these identities is integer-valued when the number which is raised to the power $m$ is prime.\\
\ \\
\indent We conclude the paper by a comparison of our {\it Adelberg}-congruences with the Fleck-like congruences reported in the introduction. They are similar in the fact that for both kinds, the most inner sum is indeed lacunary: one of the two indices of the binomial coefficient is from an arithmetic progression with a ratio which larger than $1$, but they differ by which index: in the Fleck-like congruences, the lower index of the binomial coefficient is lacunary, whereas the upper index is lacunary in our work. To illustrate further this aspect we rewrite Congruence \eqref{second} together with Wan congruence, but in a different form, highlighting the similarity and the difference. For $p$ prime, non-negative integers $m, \ell$ and $r$, under the condition  $m < (p - 1) (\ell + 1)$, we have 
\begin{align*}
\sum_{k \equiv r \bmod p} (-1)^{\frac{k-r}{p}} {\ell(p-1)\choose \frac{k-r}{p}}{k \choose m}& \equiv 0 \ \ \ \ \bmod {p^{\ell(p-1)}},
\end{align*}
whereas, under the condition $r<p$, we have
\begin{align*} \sum_{k \equiv r \bmod p} (-1)^{k-r} {\frac{k-r}{p}\choose \ell}{m \choose k}& \equiv 0 \ \ \ \ \bmod {p^{\lfloor \frac{m-p\ell  -1}{p-1}\rfloor}}.\end{align*}
\ \\
\ \\
{\bf Acknowledgement}: The author is indebted to Marko Riedel for the proof of Theorem \ref{3.2}.

\end{document}